\title{%\vspace{-50pt}
$\VIC$-modules over noncommutative rings}
\author{Andrew Putman\thanks{Supported in part by NSF grant DMS-1811322} \and Steven V Sam\thanks{Supported in part by NSF grant DMS-1849173}}%\vspace{-6pt}
\date{}
\newcommand{\arxiv}[1]{\href{http://arxiv.org/abs/#1}{{\tt arXiv:#1}}}
\apptocmd{\thebibliography}{\raggedright}{}{}
\numberwithin{equation}{section}
\newenvironment{step}[1]
 {\stepx}
 {\endstepx}
\theoremstyle{plain}
\newtheorem{theorem}{Theorem}[section]
\newtheorem{maintheorem}{Theorem}
\newtheorem{proposition}[theorem]{Proposition}
\newtheorem{lemma}[theorem]{Lemma}
\newtheorem*{claim}{Claim}
\theoremstyle{definition}
\newtheorem{defn}[theorem]{Definition}
\theoremstyle{remark}
\newtheorem{rmk}[theorem]{Remark}
\newenvironment{remark}[1][]{\begin{rmk}[#1] \pushQED{\qed}}{\popQED \end{rmk}}
\newtheorem{eg}[theorem]{Example}
\newenvironment{example}[1][]{\begin{eg}[#1] \pushQED{\qed}}{\popQED \end{eg}}
\DeclareMathOperator{\Hom}{Hom}
\DeclareMathOperator{\End}{End}
\DeclareMathOperator{\Mod}{Mod}
\DeclareMathOperator{\GL}{GL}
\DeclareMathOperator{\Mat}{Mat}
\newcommand\R{\ensuremath{\mathbb{R}}}
\newcommand\Z{\ensuremath{\mathbb{Z}}}
\newcommand\Field{\ensuremath{\mathbb{F}}}
\DeclareMathOperator{\HH}{H}
\DeclareMathOperator{\Conf}{Conf}
\DeclareMathOperator{\id}{id}
\DeclareMathOperator{\image}{im}
\newcommand\Set[2]{\ensuremath{\{\text{#1 $|$ #2}\}}}
\newcommand\bbD{\ensuremath{\mathbb{D}}}
\newcommand\bbL{\ensuremath{\mathbb{L}}}
\newcommand\bk{\ensuremath{\mathbf{k}}}
\newcommand\fP{\ensuremath{\mathfrak{P}}}
\newcommand\fS{\ensuremath{\mathfrak{S}}}
\newcommand\fc{\ensuremath{\mathfrak{c}}}
\newcommand\hfc{\ensuremath{\widehat{\fc}}}
\renewcommand\oe{\ensuremath{\overline{e}}}
\newcommand\obbL{\ensuremath{\overline{\bbL}}}
\newcommand\oR{\ensuremath{\overline{R}}}
\newcommand\oS{\ensuremath{\overline{S}}}
\newcommand\ox{\ensuremath{\overline{x}}}
\newcommand\oM{\ensuremath{\overline{M}}}
\newcommand\oh{\ensuremath{\overline{h}}}
\newcommand\og{\ensuremath{\overline{g}}}
\newcommand\oPhi{\ensuremath{\overline{\Phi}}}
\newcommand\hR{\ensuremath{\widehat{R}}}
\newcommand\hr{\ensuremath{\widehat{r}}}
\newcommand\hg{\ensuremath{\widehat{g}}}
\DeclareMathOperator{\FI}{\text{\tt FI}}
\DeclareMathOperator{\tC}{\text{\tt C}}
\DeclareMathOperator{\V}{\text{\tt V}}
\DeclareMathOperator{\VI}{\text{\tt VI}}
\DeclareMathOperator{\VIC}{\text{\tt VIC}}
\DeclareMathOperator{\OVIC}{\text{\tt OVIC}}
\DeclareMathOperator{\init}{init}
\newcommand\sinit{\ensuremath{\mathcal{I}}}
\DeclareMathOperator{\kmod}{\text{$\bk$-Mod}}
\DeclareMathOperator{\groups}{Groups}
\renewcommand{\paragraph}[1]{\bigskip \noindent {\bf #1}}
\begin{document}

\maketitle

\vspace{-24pt}
\begin{abstract}
\noindent
For a finite ring $R$, not necessarily commutative, we prove that the category of $\VIC(R)$-modules over a left Noetherian ring $\bk$ is locally Noetherian,
generalizing a theorem of the authors that dealt with commutative $R$.  As an application, we
prove a very general twisted homology stability for $\GL_n(R)$ with $R$ a finite noncommutative ring.
\end{abstract}

%\setcounter{tocdepth}{1}
%\tableofcontents

\section{Introduction}

The program of representation stability was introduced
by Church and Farb \cite{ChurchFarbRepStability, FarbICM}.  The idea is that many
of the representations that occur in nature depend on a parameter $n$, and
it is useful to study algebraic structures that encode all of these representations simultaneously.
For instance, the cohomology groups of the space $\Conf_n(\R^2)$ of configurations 
of $n$ labeled points in $\R^2$ are representations
of the symmetric group $S_n$, which acts by permuting the $n$ points.  Individually, these are
hard to understand; however, taken together they have a lot of global structure, especially
as $n \mapsto \infty$.

\paragraph{Representations of categories.}
This can be encoded in many ways.  One of the most fruitful is Church--Ellenberg--Farb's \cite{ChurchEllenbergFarbFI}
theory of $\FI$-modules.  Here $\FI$ is the category whose objects are the finite sets
$[n] = \{1,\ldots,n\}$ and whose morphisms are injections.  For a category $\tC$ like $\FI$
and a ring $\bk$, a {\em $\tC$-module} over $\bk$ is a functor $M$ from
$\tC$ to the category $\kmod$ of left $\bk$-modules.  Thus $M$ consists of a $\bk$-module $M_c$
for every object $c \in \tC$ and a $\bk$-module map $f\colon M_c \rightarrow M_d$ for every $\tC$-morphism
$f\colon c \rightarrow d$.  For an $\FI$-module
$M$, we will write $M_n$ for $M_{[n]}$.  The $\FI$-endomorphisms of $[n]$ form the symmetric group $S_n$.
These endomorphisms act on $M_n$, making each $M_n$ a representation of $S_n$. 

\begin{example}
For a fixed $p$, we can define
an $\FI$-module $M$ over $\Z$ with $M_{n} = \HH^p(\Conf_n(\R^2);\Z)$.  The induced
$S_n = \End_{\FI}([n])$-action on $M_n$ is 
precisely the $S_n$-action on $\HH^p(\Conf_n(\R^2);\Z)$ from the previous
paragraph.  We therefore get a single object encoding all these representations together
with the various ways that they are related as $n \mapsto \infty$.
\end{example}

\paragraph{Homological algebra.}
For a category $\tC$, the collection of $\tC$-modules over $\bk$ forms an abelian category
whose morphisms are natural transformations between functors $\tC \rightarrow \kmod$.  An
important insight of Church--Ellenberg--Farb \cite{ChurchEllenbergFarbFI} is that one can
do commutative and homological algebra in this category in a way that is similar to the category
$\kmod$.  For instance, one can construct projective resolutions, take derived functors, etc.

\paragraph{Local Noetherianity.}
Perhaps the most important technical result for this is a version of the Hilbert
basis theorem.  A $\tC$-module $M$ over a ring $\bk$ is {\em finitely generated} if there exist
objects $c_1,\ldots,c_k \in \tC$ and elements $x_i \in M_{c_i}$ such that the smallest
$\tC$-submodule of $M$ containing all the $x_i$ is $M$.  In other words, for each $c \in \tC$
the set
\[\bigcup_{i=1}^k \Set{$f(x_i)$}{$f\colon c_i \rightarrow c$ is a $\tC$-morphism} \subset M_c.\]
spans $M_c$.  We say that the category of $\tC$-modules over $\bk$ is {\em locally Noetherian}
if for all finitely generated $\tC$-modules $M$ over $\bk$, all
$\tC$-submodules of $M$ are finitely generated.  Generalizing previous work that
dealt for instance with fields $\bk$ of characteristic $0$, Church--Ellenberg--Farb--Nagpal \cite{CEFN}
proved that the category of $\FI$-modules over a left Noetherian ring $\bk$ is locally Noetherian.

\paragraph{VIC-modules.}
The category of $\FI$-modules encodes representations of the symmetric groups, and there has
been a huge amount of work developing analogues for other families of groups (see, e.g., 
\cite{GanLiEI, PutmanSamLinear, PutmanSamSnowdenUni, SamSnowdenGrobner, WilsonFIW}).  One
particularly important family of groups are the general linear groups $\GL_n(R)$ over a ring
$R$.  Here it is natural to look at categories whose objects are the finite-rank free
right\footnote{The purpose of considering $R^n$ as a right $R$-module is that $\GL_n(R)$ acts on $R^n$ on the left
by right $R$-module automorphisms.} $R$-modules $R^n$ with $n \geq 0$.  As for the morphisms, there are several potential choices.
To help keep the notation for our morphisms straight, we will write $[R^n]$ when we
mean to regard $R^n$ as an object of one of our categories and $R^n$ when we mean
to regard it as an $R$-module.
\begin{compactitem}
\item The category $\V(R)$, whose morphisms $[R^n] \rightarrow [R^m]$ are
$R$-linear maps $R^n \rightarrow R^m$.  Versions of this go back to work 
of Lannes and Schwartz and are the focus of the {\em Artinian conjecture} (see \cite[Conjecture 3.12]{Kuhn}),
which was resolved independently by the authors \cite{PutmanSamLinear} and by Sam--Snowden \cite{SamSnowdenGrobner}.
\item The category $\VI(R)$, whose morphisms $[R^n] \rightarrow [R^m]$ are 
injective $R$-linear maps $f\colon R^n \rightarrow R^m$ that are splittable in the sense that
there exists some $g\colon R^m \rightarrow R^n$ with $g \circ f = \text{id}$.  Equivalently, the
image of $f$ is a summand of $R^m$.  This was introduced by Scorichenko in 
his thesis (\cite{ScorichnkoThesis}; see \cite{FraFriPiSch} for a published account).
\item The category $\VIC(R)$, whose morphisms $[R^n] \rightarrow [R^m]$ are pairs $(f_1,f_2)$, where
$f_1\colon R^n \rightarrow R^m$ is an injective $R$-linear map and $f_2\colon R^m \rightarrow R^n$ is
a splitting of $f_1$, so $f_2 \circ f_1 = \text{id}$.  This was introduced by
the authors in \cite{PutmanSamLinear}.
\end{compactitem}

\begin{remark}
One motivation for studying $\VIC(R)$ is that it is the only one of these categories where
there is a functor $\VIC(R) \rightarrow \groups$ taking $R^n \in \VIC(R)$ to $\GL_n(R)$.  For
a morphism $(f_1,f_2)\colon [R^n] \rightarrow [R^m]$, the induced group homomorphism
$\GL_n(R) \rightarrow \GL_m(R)$ is as follows.  Set $C = \ker(f_2)$, so $R^m = \image(f_1) \oplus C$.
Our homomorphism then takes $\phi \in \GL_n(R)$ to the map $R^m \rightarrow R^m$ 
obtained from $f_1 \circ \phi \circ f_1^{-1}\colon \image(f_1) \rightarrow \image(f_1)$ by extending
over $C$ by the identity.
\end{remark}

\begin{remark}
\label{remark:stablefree}
Our definition of $\VIC(R)$ is slightly different from the one in \cite{PutmanSamLinear}, which
requires that a $\VIC(R)$-morphism $(f_1,f_2)$ also have $\ker(f_2)$ free.  For finite (and, more
generally, Artinian) rings,
this added condition is superfluous: $\ker(f_2)$ is in any case stably free, and for Artinian
rings finitely generated stably free modules are free (see \cite[Example I.4.7.3]{LamSerre};
rings with this property are called {\em Hermite rings}).
\end{remark}

\paragraph{Main theorem.}
Fix a left Noetherian ring $\bk$.
In \cite{PutmanSamLinear}, it is proven that for a finite {\em commutative} ring $R$, the categories
of $\V(R)$- and $\VI(R)$- and $\VIC(R)$-modules over $\bk$ are all locally Noetherian (see \cite{SamSnowdenGrobner} for alternate
proofs for $\V(R)$ and $\VI(R)$, but not for $\VIC(R)$).  However, in many situations (e.g., in algebraic K-theory),
it is important to study $\GL_n(R)$ where $R$ is a noncommutative ring.  For instance, $R$ might be the group
ring $\Field_p[G]$ of a finite group $G$.  Our main theorem addresses this more general situation:

\begin{maintheorem}
\label{maintheorem:noetherian}
Let $R$ be a finite ring, not necessarily commutative, and let $\bk$ be
a left Noetherian ring.  Then the categories of $\V(R)$- and
$\VI(R)$- and $\VIC(R)$-modules over $\bk$ are locally Noetherian.
\end{maintheorem}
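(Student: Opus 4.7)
The plan is to handle the three categories in sequence, with $\VIC(R)$ being the main new content. For $\V(R)$ and $\VI(R)$, I would verify that the Gröbner-theoretic machinery of Sam--Snowden extends from the commutative to the noncommutative setting essentially verbatim. The underlying well-quasi-orderings on matrices with entries in $R$ require only that $R$ be finite, not commutative, and the order-theoretic arguments (variants of Higman's lemma) depend solely on the alphabet being well-quasi-ordered.

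For $\VIC(R)$, I would reduce to the commutative case of \cite{PutmanSamLinear} via three structural reductions. First, since $R$ is finite and hence Artinian, its Jacobson radical $J$ is nilpotent, and the quotient $R/J$ is semisimple; by Artin--Wedderburn, $R/J \cong \prod_i \Mat_{d_i}(\Field_{q_i})$ is a product of matrix rings over finite fields. Second, for a matrix ring $R = \Mat_d(S)$, Morita equivalence identifies $\VIC(R)$ with the full subcategory of $\VIC(S)$ spanned by objects $[S^n]$ with $d \mid n$; since this fully faithful inclusion has cofinal essential image, left Kan extension along it is fully faithful on module categories, so local Noetherianity of $\VIC(S)$-modules descends to $\VIC(R)$-modules. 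Third, for a product $R = R_1 \times R_2$, the canonical decomposition $R^n = R_1^n \oplus R_2^n$ respects splittings and yields a product-type decomposition of $\VIC(R)$ that reduces local Noetherianity to each factor. Combining these three reductions handles the case $J = 0$ by appeal to the known result for $\VIC(\Field_q)$.

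The remaining step is the bootstrap from $\VIC(R/J)$ to $\VIC(R)$, which I would carry out by induction on the nilpotency degree $k$ of $J$. Setting $I = J^{k-1}$ gives $I^2 = 0$; reduction modulo $I$ yields a functor $\VIC(R) \to \VIC(R/I)$, and the condition $I^2 = 0$ forces the fiber of this functor over a fixed $\VIC(R/I)$-morphism to be an affine space cut out by a linear constraint on $I$-valued perturbation data. I would then filter a finitely generated $\VIC(R)$-module $M$ by powers of $I$ and analyze the associated graded as a representation of an auxiliary category encoding both $\VIC(R/I)$ and $I$. The hard part is precisely this extension step: while the $I^2 = 0$ condition linearizes the fiber structure, controlling how submodules of $M$ interact with the filtration---and, conversely, ensuring that generators of submodules of the associated graded lift to generators of $M$ in bounded degree---requires a careful argument, likely via a spectral sequence or through a direct comparison of $\VIC(R)$-modules with modules over a hybrid category built from the $\VIC(R/I)$-structure together with the square-zero $I$-data.
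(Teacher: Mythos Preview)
Your approach diverges substantially from the paper's, and the divergence exposes a real gap.

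The paper does not reduce to the commutative case at all. Instead it constructs a subcategory $\OVIC(R)\subset\VIC(R)$ directly for an arbitrary finite $R$: using idempotent lifting along $R\twoheadrightarrow\oR=R/J(R)$, it builds an Artin--Wedderburn embedding $\Phi\colon R\hookrightarrow\Mat_\mu(R)$ and uses it to define ``column-adapted'' surjections $R^m\to R^n$. The Jacobson radical is not handled by an inductive bootstrap; rather, the pivot positions $\fS(h,k)$ are read off over the semisimple quotient $\oR$ while the canonical splittings and the free/dependent-row bookkeeping take place over $R$ itself. Local Noetherianity for $\OVIC(R)$ is then proved by a Gr\"obner argument (Lemma~\ref{lemma:constructorder}) that encodes each morphism as a word over a finite alphabet built from $R\sqcup\{\clubsuit\}$ and invokes a Higman-type well-partial-ordering. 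For $\V(R)$ and $\VI(R)$ the paper does not extend Sam--Snowden directly; it cites the reduction to $\VIC(R)$ from \cite[\S 2.4]{PutmanSamLinear}, observing that the reduction never used commutativity. Your claim that the Sam--Snowden argument for $\VI(R)$ extends ``essentially verbatim'' to noncommutative $R$ is not obviously true: their ordering already relies on the product-of-local-rings structure of finite commutative rings, which is precisely what fails here.

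Your semisimple reductions are plausible but not free. The Morita step is fine once you check that the full-subcategory inclusion $\VIC(\Mat_d(S))\hookrightarrow\VIC(S)$ lets you pull finite generation of submodules back (it does, by the fullness argument you allude to). The product step is more delicate than you indicate: $\VIC(R_1\times R_2)$ is only the \emph{diagonal} full subcategory of $\VIC(R_1)\times\VIC(R_2)$, so you first need local Noetherianity for modules over the product category, which is an independent statement you have not supplied.

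The genuine gap is the bootstrap from $\VIC(R/I)$ to $\VIC(R)$ with $I^2=0$. Two concrete obstructions. First, there is no natural filtration ``by powers of $I$'' on an abstract $\VIC(R)$-module $M$: the ideal $I$ acts on $R$-modules, not on $\bk$-modules, and $M_n$ carries only a $\bk[\GL_n(R)]$-structure, so your proposed filtration has no evident meaning. Second, and more structurally, the reduction $\VIC(R)\to\VIC(R/I)$ is a quotient functor (bijective on objects, surjective on morphisms), and local Noetherianity transfers along quotient functors from source to target, not the reverse. Knowing that the fibers are affine torsors for an $I$-valued linear group tells you the cardinality of $\Hom_{\VIC(R)}(R^d,R^n)$ relative to $\Hom_{\VIC(R/I)}((R/I)^d,(R/I)^n)$, but to run any Gr\"obner-style argument you must totally order these Hom-sets compatibly with composition, and the $I$-perturbations interact with composition in a way your fiber description does not control. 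Any serious attempt to repair this---choosing a section of the quotient, ordering the fiber data, tracking how the orders behave under composition---amounts to rebuilding the $\OVIC(R)$ machinery that the paper constructs directly.
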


\begin{remark}
In Theorem \ref{maintheorem:noetherian}, we allow not just the finite rings $R$ to be noncommutative, but
also the base rings $\bk$.  In fact, for $R$ commutative the proof
of Theorem \ref{maintheorem:noetherian} in \cite{PutmanSamLinear} works in that level of generality.
\end{remark}

\begin{remark}
For infinite commutative $R$, the authors proved in \cite{PutmanSamLinear} that the categories
of $\V(R)$- and $\VI(R)$- and $\VIC(R)$-modules over a ring $\bk$ 
are {\em not} locally Noetherian.  The same
argument works for infinite noncommutative $R$.  See \cite{Harman} for one way to get around
this for $R = \Z$.
\end{remark}

\paragraph{Application: twisted homological stability.}
A basic theorem of van der Kallen \cite{vanDerKallen} says that for rings $R$ satisfying
mild hypotheses (for instance, all finite rings), the groups $\GL_n(R)$ satisfy
{\em homological stability}, i.e., for all $p$, we have
\[\HH_p(\GL_n(R);\Z) \cong \HH_p(\GL_{n+1}(R);\Z) \quad \text{for $n \gg p$}.\]
In fact, building on ideas of Dwyer \cite{Dwyer}, van der Kallen is even able to prove
this for certain twisted coefficient systems (those that are ``polynomial'' in an appropriate sense).
For example, he is able to show for all $m \geq 0$ that we have
\[\HH_p(\GL_n(R);(R^n)^{\otimes m}) \cong \HH_p(\GL_{n+1}(R);(R^{n+1})^{\otimes m}) \quad \text{for $n \gg p$}.\]
See \cite{RandalWilliamsWahl} and \cite{PutmanTwistedStability} for alternate proofs of this that use
the language of $\VIC(R)$-modules to encode the twisted coefficients.

In \cite[\S 4]{PutmanSamLinear}, the authors showed how to deduce a much more general version of this
for finite commutative rings from the local Noetherianity of $\VIC(R)$.  Given our new
Theorem \ref{maintheorem:noetherian}, the exact same argument
gives the following result for finite noncommutative rings.  For a $\VIC(R)$-module $M$, write $M_n$
for the value of $M$ on $[R^n] \in \VIC(R)$.  The $\VIC(R)$-endomorphisms of $[R^n]$ form the group $\GL_n(R)$, so
$M_n$ is a representation of $\GL_n(R)$.

\begin{maintheorem}
\label{maintheorem:twisted}
Let $R$ be a finite ring, not necessarily commutative, and let $M$ be a finitely generated $\VIC(R)$-module over a left Noetherian ring $\bk$.
Then for all $p \geq 0$, we have
\[
  \HH_p(\GL_n(R);M_n) \cong \HH_p(\GL_{n+1}(R);M_{n+1})
\]
for $n \gg p$.
\end{maintheorem}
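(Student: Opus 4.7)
The plan is to follow the strategy of Section~4 of \cite{PutmanSamLinear} verbatim, using Theorem~\ref{maintheorem:noetherian} as the key input in place of its commutative predecessor. Nothing else in that argument used commutativity of $R$, and the ingredients below go through for any finite ring.

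First, I would build a resolution of $M$ by representable $\VIC(R)$-modules. Let $P(m)$ be the $\VIC(R)$-module defined by $P(m)_n=\bk[\Hom_{\VIC(R)}([R^m],[R^n])]$; by Yoneda, $\Hom(P(m),M)=M_m$. Since $M$ is finitely generated, pick a finite set of generators $x_j\in M_{m_j}$ to get a surjection $F_0=\bigoplus_j P(m_j)\twoheadrightarrow M$. By Theorem~\ref{maintheorem:noetherian}, the kernel is again finitely generated, and iterating yields a resolution
\[
  \cdots\to F_1\to F_0\to M\to 0,
\]
with each $F_i$ a finite direct sum $\bigoplus_j P(m_{i,j})$. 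Choosing the generators minimally, the degrees $m_{i,j}$ go to infinity with $i$, so for each fixed $n$ only finitely many terms are nonzero in $\VIC(R)$-degrees $\le n$.

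Second, I would analyze the building blocks. The $\GL_n(R)$-action on $\Hom_{\VIC(R)}([R^m],[R^n])$ is transitive with stabilizer isomorphic to $\GL_{n-m}(R)$ (using Remark~\ref{remark:stablefree} to identify the complement of a splittable injection $R^m\hookrightarrow R^n$ with $R^{n-m}$). Hence $P(m)_n\cong \bk[\GL_n(R)/\GL_{n-m}(R)]$ as a $\GL_n(R)$-module, and Shapiro's lemma gives
\[
  \HH_p(\GL_n(R);P(m)_n)\cong \HH_p(\GL_{n-m}(R);\bk).
\]
Moreover the stabilization map $P(m)_n\to P(m)_{n+1}$ induced by the canonical $\VIC(R)$-morphism $[R^n]\to[R^{n+1}]$ corresponds under this identification to van der Kallen's stabilization map for the untwisted groups $\GL_{n-m}(R)$. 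Van der Kallen's theorem \cite{vanDerKallen} for a finite ring $R$ with trivial coefficients then implies that each $\HH_p(\GL_n(R);P(m)_n)$ stabilizes once $n-m\gg p$.

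Third, I would run a hyperhomology spectral sequence. Applying $\HH_\ast(\GL_n(R);-)$ to the truncated resolution gives
\[
  E^1_{i,j}=\HH_j(\GL_n(R);(F_i)_n)\ \Rightarrow\ \HH_{i+j}(\GL_n(R);M_n),
\]
and the natural stabilization map $[R^n]\to[R^{n+1}]$ induces a map of spectral sequences to the corresponding object for $n+1$. The growth condition on generating degrees ensures that for fixed $p$ only finitely many $E^1_{i,j}$ with $i+j\le p+1$ are relevant, and by the previous step all of these $E^1$-terms are stable isomorphisms for $n$ sufficiently large. A five-lemma induction on the filtration then yields the isomorphism $\HH_p(\GL_n(R);M_n)\cong\HH_p(\GL_{n+1}(R);M_{n+1})$ for $n\gg p$.

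The main obstacle is bookkeeping, not conceptual: one needs van der Kallen's range to be sufficiently uniform (it is linear in $p$, which suffices), and one must check that the stabilization map built from the canonical $\VIC(R)$-morphism $[R^n]\to[R^{n+1}]$ genuinely agrees on each $P(m)$ with the map van der Kallen's theorem controls. This compatibility — together with the combinatorics of tracking how the stable range on building blocks $P(m)$ propagates through the spectral sequence for $M$ — is the only nontrivial verification, and it is formally identical to the argument carried out in \cite[\S 4]{PutmanSamLinear}.
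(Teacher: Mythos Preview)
Your proposal is correct and takes essentially the same approach as the paper: the paper gives no independent proof of this theorem but simply states that the argument of \cite[\S 4]{PutmanSamLinear} goes through verbatim once Theorem~\ref{maintheorem:noetherian} replaces its commutative predecessor, and that is exactly what you outline. Your sketch of the resolution-by-representables, Shapiro reduction to van der Kallen, and spectral-sequence comparison is an accurate summary of that argument; the only superfluous remark is that the generating degrees $m_{i,j}$ go to infinity, which is not needed since for fixed $p$ only $F_0,\ldots,F_{p+1}$ enter and each is a finite sum of representables.
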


\begin{remark}
The proof of Theorem \ref{maintheorem:twisted} for commutative rings 
in \cite[\S 4]{PutmanSamLinear} uses the more stringent definition of
$\VIC(R)$ discussed in Remark \ref{remark:stablefree}, which as we discussed there
is equivalent to ours for finite rings.
\end{remark}

\begin{remark}
The references \cite{vanDerKallen, RandalWilliamsWahl, PutmanTwistedStability} 
give explicit estimates of when this stability occurs.  Since
we apply our non-effective Noetherianity theorem, we are not able to give such an estimate.  However,
our theorem applies to much more general coefficient systems than the polynomial ones
considered in \cite{vanDerKallen, RandalWilliamsWahl, PutmanTwistedStability}.
\end{remark}

\paragraph{Ideas from proof.}
We will derive Theorem \ref{maintheorem:noetherian} for $\V(R)$ and $\VI(R)$ from the
case of $\VIC(R)$, so we will focus on that category.
In \cite{PutmanSamLinear}, this is dealt with for finite commutative $R$ by a sort of
Gr\"{o}bner basis argument that was introduced to the theory of representation stability
in \cite{SamSnowdenGrobner} (though the general theorems
of \cite{SamSnowdenGrobner} do not apply to $\VIC(R)$; also, we remark that a similar kind of
argument appeared much earlier in work of Richter \cite{Richter}).
We do the same thing, but the details are
far harder.  The main issue is that finite noncommutative rings are much more complicated than
finite commutative rings.  Indeed, the starting point of the proof in \cite{PutmanSamLinear} is the
fact that finite commutative rings are Artinian, and thus are the product of finitely many local rings.
Local rings are not that different from fields, so in the end we can mostly focus on the case of
finite fields.  Unfortunately, noncommutative Artinian rings are not nearly as well-behaved, which 
greatly complicates the proof.

\paragraph{Convention: left vs right modules.}
Throughout this paper, we emphasize that column vectors $R^n$ are considered as right $R$-modules.  With
this convention, the group $\GL_n(R)$ acts on $R^n$ on the left by right $R$-module homomorphisms.  If
we wanted to deal with left $R$-modules, then we would have to use row vectors and have $\GL_n(R)$ act
on the right.

\paragraph{Outline.}
We start in \S \ref{section:reductionordered} by reducing to proving local Noetherianity for an
``ordered'' version of $\VIC(R)$ called $\OVIC(R)$.  The rest of the paper is devoted to this: in
\S \ref{section:artinianrings}, we discuss the structure of finite noncommutative rings, in
\S \ref{section:ovicbasic} we define $\OVIC(R)$ and give its basic properties, and finally
in \S \ref{section:ovicnoetherian} we prove that the category of $\OVIC(R)$-modules is
locally Noetherian.

\begin{remark}
Some parts of our argument are the same as in \cite{PutmanSamLinear}, but we tried to make
this paper mostly self-contained at least for $\VIC(R)$.  The fact that we will focus on
this single category will allow us to write in a much less abstract way, so one side benefit
is that we think some of the details of the proof here will be a little easier to parse.
\end{remark}

\paragraph{Acknowledgments.}
We would like to thank Benson Farb and Andrew Snowden for helpful comments, and 
Peter Patzt for pointing out a small mistake in an earlier version of this paper.

\section{Reduction to ordered \texorpdfstring{$\VIC$}{VIC}}
\label{section:reductionordered}

Instead of working with $\VIC(R)$ directly, our proof will focus on a subcategory $\OVIC(R)$.  The ``O'' stands
for ``ordered''.  Its main properties are as follows:

\begin{theorem}
\label{theorem:orderedsummary}
Let $R$ be a finite ring.  There exists a subcategory $\OVIC(R)$ of $\VIC(R)$ with the following properties:
\begin{compactitem}
\item[(a)] The objects of $\OVIC(R)$ are the same as $\VIC(R)$: the finite-rank free $R$-modules $R^n$ for $n \geq 0$.
\item[(b)] Every $\VIC(R)$-morphism $f\colon [R^d] \rightarrow [R^n]$ can be factored as
\[[R^d] \stackrel{f_1}{\longrightarrow} [R^d] \stackrel{f_2}{\longrightarrow} [R^n],\]
where $f_1\colon [R^d] \rightarrow [R^d]$ is a $\VIC(R)$-morphism and $f_2\colon [R^d] \rightarrow [R^n]$ is an
$\OVIC(R)$-morphism.
\item[(c)] The category of $\OVIC(R)$-modules over a left Noetherian ring $\bk$ is locally Noetherian.
\end{compactitem}
\end{theorem}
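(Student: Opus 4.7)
My plan is to treat Theorem~\ref{theorem:orderedsummary} as a three-part roadmap: construct $\OVIC(R)$, verify the factorization property (b), and then work to establish the deep Noetherianity statement (c). Property (a) will hold by construction.

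The construction of $\OVIC(R)$ would begin with the structural description of finite rings (Wedderburn--Artin, the Jacobson radical $J(R)$, the finite filtration $R \supset J(R) \supset J(R)^2 \supset \cdots$) that \S\ref{section:artinianrings} promises to develop. Using this I would fix a total order on $R$ compatible with that filtration and lift it to a (reverse) lexicographic order on column vectors in $R^n$. An $\OVIC(R)$-morphism $[R^d] \to [R^n]$ is then declared to be a $\VIC(R)$-morphism $(f_1,f_2)$ whose columns $f_1(e_1),\ldots,f_1(e_d)$ are in normalized form with respect to this order---for instance, each column is the minimal representative in its orbit under the group of automorphisms fixing the earlier columns. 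Property (a) is then immediate.

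For (b), the strategy is a classical column-reduction argument. Given an arbitrary $\VIC(R)$-morphism $f\colon [R^d] \to [R^n]$, the group $\GL_d(R) = \Aut_{\VIC(R)}([R^d])$ acts on the source by $\VIC(R)$-automorphisms, and I would proceed column by column from left to right: at each step pick the unique automorphism of $[R^d]$ that fixes the columns already brought into normal form and makes the next column the minimal representative of its orbit. The resulting composition factors $f$ as a $\VIC(R)$-automorphism $f_1$ of $[R^d]$ followed by an $\OVIC(R)$-morphism $f_2$, which is exactly the claimed factorization.

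Part (c) is the heart of the matter and the main obstacle. My plan there is a Gr\"{o}bner-style argument in the spirit of \cite{SamSnowdenGrobner}: endow the set of $\OVIC(R)$-morphisms out of each object with the ``divisibility'' partial order coming from post-composition by further $\OVIC(R)$-morphisms, prove that this is a well-partial-order, and deduce local Noetherianity formally from this property combined with the left Noetherianity of $\bk$. The hard step, and the precise reason the proof in \cite{PutmanSamLinear} does not transfer, is verifying the well-partial-order property over a finite noncommutative Artinian ring: in the commutative case one splits $R$ as a product of local rings, each of which behaves almost like a field, and this decomposition fails in general. I expect the replacement to be a careful induction up the Jacobson radical filtration, using a Higman/Dickson-style embedding argument on the ``leading term'' of a column together with an inductive use of well-partial-ordering for the coefficient data living in the successive semisimple quotients $J(R)^i/J(R)^{i+1}$, regarded as bimodules over the (still noncommutative, but now semisimple) ring $R/J(R)$. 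This is where the delicate interaction between ordering the columns and twisting by noncommutative multiplication will need to be controlled, and it is what forces the remainder of the paper to be split between the three sections \S\ref{section:artinianrings}--\S\ref{section:ovicnoetherian}.
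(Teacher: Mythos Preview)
Your high-level architecture---define an ordered subcategory, prove the factorization, then run a Gr\"obner/well-partial-order argument culminating in a Higman-type lemma---matches the paper. But the implementation diverges in two places, and the first is a genuine gap.

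\textbf{Where the normalization lives.} You normalize the injection $f_1$, asking its columns to be minimal orbit representatives under source automorphisms. The paper instead normalizes the \emph{splitting}: an $\OVIC(R)$-morphism is a pair $(f',f'')$ with $f''$ ``column-adapted,'' an echelon-type condition phrased via the Artin--Wedderburn embedding $\Phi\colon R\hookrightarrow\Mat_\mu(R)$ and pivot sets $\fS(f'',k)$. This choice is not cosmetic: the paper can then prove (Lemma~\ref{lemma:composecoladapted2}) that column-adapted surjections compose, so $\OVIC(R)$ is genuinely a subcategory. Your minimal-representative condition on injections carries no such closure---if $f\colon[R^d]\to[R^n]$ and $g\colon[R^n]\to[R^m]$ each have normalized injection part, nothing forces $g\circ f$ to---so as stated you do not have a subcategory, and both (b) and (c) presuppose one.

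\textbf{How the well-partial-order is obtained.} You propose induction along the radical filtration $J(R)\supset J(R)^2\supset\cdots$, handling each semisimple layer separately. The paper does no such induction. It works once through the Artin--Wedderburn embedding, constructs explicit orders $(\preceq,\leq)$ on $\bigsqcup_n\Hom_{\OVIC}([R^d],[R^n])$ (Lemma~\ref{lemma:constructorder}), and proves $\preceq$ is a well partial order by encoding each $f=(f',f'')$ as a word over a finite alphabet: one interleaves the columns of $f''$ with the ``free'' row-blocks of $\Phi(f')$, masking the ``dependent'' rows (those pinned down by the pivot positions together with $f''f'=\id$; see Lemma~\ref{lemma:comparefree}) with a dummy symbol, and then invokes the Higman variant of Lemma~\ref{lemma:words}. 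The remaining hard step is to build, for $f\preceq g$, a lift $\phi$ with $\phi\circ f=g$ that also preserves the total order on everything below $f$; this relies on the canonical splitting of Lemma~\ref{lemma:canonicalsplitting}, available precisely because the normalization sits on the splitting side. Your filtration scheme might be salvageable, but it is a different and unverified route, and you have not located the free/dependent-row dichotomy or the canonical-splitting mechanism that the argument actually turns on.
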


\noindent
The proof of Theorem \ref{theorem:orderedsummary} is spread throughout the rest of the paper: in
\S \ref{section:artinianrings}, we discuss some ring-theoretic preliminaries, in \S \ref{section:ovicbasic}
we construct $\OVIC(R)$ and prove part (b) of Theorem \ref{theorem:orderedsummary} (see
Proposition \ref{proposition:factorovic}), and finally in \S \ref{section:ovicnoetherian} we
prove part (c) of Theorem \ref{theorem:orderedsummary} (see Proposition \ref{proposition:ovicnoetherian}).  Here
we will show how to use Theorem \ref{theorem:orderedsummary} to prove Theorem \ref{maintheorem:noetherian}.

\begin{proof}[Proof of Theorem \ref{maintheorem:noetherian}, assuming Theorem \ref{theorem:orderedsummary}]
Let $R$ be a finite ring, not necessarily commutative, and let $\bk$ be a left Noetherian
ring.
In \cite[\S 2.4]{PutmanSamLinear}, the local Noetherianity of the categories of $\V(R)$- and $\VI(R)$-modules over $\bk$
for finite commutative rings $R$
are derived from the local Noetherianity of the category of $\VIC(R)$-modules over $\bk$.  
This derivation does not make use of the commutativity
of $R$, so we must just prove that the category of $\VIC(R)$-modules over $\bk$ 
is locally Noetherian.

Let $M$ be a finitely generated $\VIC(R)$-module over $\bk$.  Our goal is to
prove that every $\VIC(R)$-submodule $N$ of $M$ is finitely generated.  Theorem \ref{theorem:orderedsummary}
says that for the subcategory $\OVIC(R)$ of $\VIC(R)$, the category of $\OVIC(R)$-modules
over $\bk$ is locally Noetherian.  Via restriction, we can regard $M$ as an $\OVIC(R)$-module
and $N$ as an $\OVIC(R)$-submodule of $M$.
We will prove that $M$ is finitely generated as an $\OVIC(R)$-module.
Since the category of $\OVIC(R)$-modules over $\bk$ is locally Noetherian, this will imply
that $N$ is finitely generated as an $\OVIC(R)$-module, and hence a fortiori is also
finitely generated as a $\VIC(R)$-module, as desired.

We will prove that $M$ is finitely generated as an $\OVIC(R)$-module 
by studying representable $\VIC(R)$-modules, which function similarly to free modules.
For $d \geq 0$, let $P(d)$ be the $\VIC(R)$-module defined as follows:
\begin{compactitem}
\item For $n \geq 0$, the $\bk$-module $P(d)_n$ is the free $\bk$-module
with basis $\Hom_{\VIC(R)}(R^d,R^n)$.
\item For a $\VIC(R)$-module morphism $g\colon [R^n] \rightarrow [R^m]$, the
induced $\bk$-module morphism $P(d)_n \rightarrow P(d)_m$ is the one
taking a basis element $f\colon [R^d] \rightarrow [R^n]$ of $P(d)_n$ to
the basis element $g \circ f\colon [R^d] \rightarrow [R^m]$ of $P(d)_m$.
\end{compactitem}
By Theorem \ref{theorem:orderedsummary}, every $\VIC(R)$-module morphism
$f\colon [R^d] \rightarrow [R^n]$ can be factored as
\[[R^d] \stackrel{f_1}{\longrightarrow} [R^d] \stackrel{f_2}{\longrightarrow} [R^n],\]
where $f_1\colon [R^d] \rightarrow [R^d]$ is a $\VIC(R)$-morphism and $f_2\colon [R^d] \rightarrow [R^n]$ is an
$\OVIC(R)$-morphism.  This implies that as an $\OVIC(R)$-module, $P(d)$ is generated by
the set $\Hom_{\VIC(R)}(R^d,R^d) \subset P(d)_d$, which is finite since $R$ is a finite ring.

For all $x \in M_d$ there exists a homomorphism of $\VIC(R)$-modules $P(d) \rightarrow M$ taking the element
$\text{id}\colon [R^d] \rightarrow [R^d]$ of $P(d)_d$ to $x$.  The image of this
$\VIC(R)$-module homomorphism is the $\VIC(R)$-submodule generated by $x$.  
Since $M$ is finitely generated as a $\VIC(R)$-module, for some $d_1,\ldots,d_k \geq 1$ we can find elements $x_i \in M_{d_i}$ 
such that $\{x_1,\ldots,x_k\}$ generates $M$.  Associated to these $x_i$ is a surjective $\VIC(R)$-module homomorphism
\[\bigoplus_{i=1}^k P(d_i) \longrightarrow M.\]
This is, a fortiori, a homomorphism of $\OVIC(R)$-modules, and since each $P(d_i)$ is finitely generated as an $\OVIC(R)$-module, we conclude that $M$ is as well.
\end{proof}

\section{The structure of Artinian rings}
\label{section:artinianrings}

To discuss $\OVIC(R)$, we will need some basic facts about finite rings.  In fact, the
results we need hold more generally for Artinian rings, so we will state them
in this level of generality.  A suitable textbook reference is \cite{LamBook}.  Throughout
this section, $R$ is an Artinian ring.

\paragraph{Peirce decomposition, I.}
We begin with some generalities (see \cite[\S 21]{LamBook}).  Assume that $\{e_1,\ldots,e_{\mu}\}$ are idempotent
elements of $R$ that are orthogonal (i.e., $e_i e_j = 0$ for distinct $1 \leq i,j \leq \mu$) and satisfy
\[1 = e_1 + \cdots + e_{\mu}.\]
Each $e_i R e_j$ is an additive subgroup of $R$, and we have the Peirce decomposition
\begin{equation}
\label{eqn:peirce1}
R = \bigoplus_{i,j=1}^{\mu} e_i R e_j.
\end{equation}
To make this a ring isomorphism, view elements of the right hand side as $\mu \times \mu$ matrices
whose $(i,j)$-entries lie in $e_i R e_j$.  Using the fact that 
\[(e_i R e_k) (e_k R e_j) \subset e_i R e_j \quad \text{and} \quad (e_i R e_k) (e_{k'} R e_j) = 0 \quad \text{if $k \neq k'$},\]
we can multiply these matrices as usual, turning the right hand side of \eqref{eqn:peirce1} into a ring
and \eqref{eqn:peirce1} into a ring isomorphism.  Since $e_i R e_j \subset R$, we can view
\eqref{eqn:peirce1} as an embedding $\Phi\colon R \hookrightarrow \Mat_{\mu}(R)$ that we will
call the {\em Peirce embedding}.\footnote{The fact that the target of the Peirce embedding is $\Mat_{\mu}(R)$ is
just a matter of convenience so we do not have to precisely define a ring of ``matrices'' whose entries all lie in
different places.  Later on we will identify the various $e_i R e_j$ with division rings $\bbD_k$ and even additive
groups $\bbL_{hk}$, and we suggest to the reader that they not focus too much on how these lie inside $R$.}

\paragraph{Peirce decomposition, II.}
Continue with the notation of the previous paragraph.  A more conceptual way to think about
the Peirce embedding is as follows.  Each $e_i R$ is a right $R$-module, and letting 
$R_R$ denote $R$ considered as a right $R$-module we have
\[R_R = \bigoplus_{i=1}^{\mu} e_i R.\]
Via left multiplication, the ring $R$ acts on the left\footnote{This is not a typo -- we are using
the fact that the left and right actions of $R$ on itself commute, i.e., that $R$ is an $(R,R)$-bimodule.} on $R_R$ by right $R$-module endomorphisms, and in fact
$R \cong \End(R_R)$.  We thus have
\begin{equation}
\label{eqn:peirce2}
R = \End(R_R) = \bigoplus_{i,j=1}^{\mu} \Hom(e_j R, e_i R).
\end{equation}
For all $1 \leq i,j \leq \mu$, we have $\Hom(e_j R, e_i R) = e_i R e_j$, where $\phi \in \Hom(e_j R,e_i R)$ corresponds
to the element\footnote{Note that $\phi(e_j) = \phi(e_j^2) = \phi(e_j) e_j = e_i r e_j$ for some $r \in R$.} $\phi(e_j) \in e_i R e_j$.  Making these identifications turns \eqref{eqn:peirce2} into
\eqref{eqn:peirce1}.  This makes it clear that the Peirce embedding reflects the left action of $R$ on $R_R$; indeed,
using
\[R_R = \bigoplus_{i=1}^{\mu} e_i R,\]
we can embed $R_R$ into the set of length-$\mu$ column vectors $R^{\mu}$, which is itself a right $R$-module.
The matrices $\Mat_{\mu}(R)$ act on the left on $R^{\mu}$ by right $R$-module endomorphisms, and we have a commutative diagram
\[\begin{CD}
R    @>{\cong}>> \End(R_R) \\
@V{\Phi}VV             @VVV \\
\Mat_{\mu}(R) @>{\cong}>> \End(R^{\mu}).
\end{CD}\]

\paragraph{Jacobson radical.}
Let $J(R)$ be the Jacobson radical of $R$, i.e., the intersection of all left ideals of $R$.  Alternatively,
$J(R)$ consists of all $y \in R$ such that for all $x,z \in R$, the element $1-xyz$ is a unit  (see \cite[Lemma 4.3]{LamBook}).  
Since $R$ is Artinian,
$J(R)$ can also be characterized as the largest ideal of $R$ that is nilpotent, i.e., such that
$J(R)^k = 0$ for $k \gg 0$ (see \cite[Theorem 4.12]{LamBook}).  Let $\oR = R/J(R)$.  For
$x \in R$, let $\ox \in \oR$ be its image.  Also, for a matrix $M \in \Mat_{n,m}(R)$, let
$\oM \in \Mat_{n,m}(\oR)$ be its image.  The following simple fact will be important for
us.

\begin{lemma}
\label{lemma:checkinvertible}
Let $R$ be a ring and let $M \in \Mat_{n}(R)$ for some $n \geq 1$.  Then $M$ is invertible if
and only if $\oM$ is invertible.
\end{lemma}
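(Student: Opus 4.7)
The plan is to reduce the statement to the standard principle that an element of any ring is a unit if and only if its image in the quotient by the Jacobson radical is a unit. The forward direction is immediate: the quotient map $R \to \oR$ induces a ring homomorphism $\Mat_n(R) \to \Mat_n(\oR)$, and any ring homomorphism sends units to units.

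For the reverse direction, I would invoke two standard facts from noncommutative ring theory. First, for an arbitrary ring $S$, an element $s \in S$ is a unit if and only if its image $\overline{s} \in S/J(S)$ is a unit. One direction is trivial; for the other, if $\overline{s}$ is a unit then one can choose $t \in S$ with $st \equiv 1 \pmod{J(S)}$, so $st = 1 - j$ for some $j \in J(S)$, and $1 - j$ is automatically a unit of $S$ by the definition of the Jacobson radical, yielding a right inverse for $s$; symmetrically one obtains a left inverse, so $s$ is a unit. The second fact is that $J(\Mat_n(R)) = \Mat_n(J(R))$, which gives the natural identification
\[
\Mat_n(R)/J(\Mat_n(R)) \;=\; \Mat_n(R)/\Mat_n(J(R)) \;\cong\; \Mat_n(\oR).
\]

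Applying the first fact to the ring $S = \Mat_n(R)$ and combining with this identification yields the lemma: $M$ is invertible in $\Mat_n(R)$ precisely when its image $\oM$ is invertible in $\Mat_n(\oR)$. There is no real obstacle here; the only work is to quote (or briefly justify) the identity $J(\Mat_n(R)) = \Mat_n(J(R))$ and the unit-lifting principle, both of which appear in standard references such as \cite{LamBook}. Note that no Artinian hypothesis on $R$ is needed for either of these, which is why the lemma is stated for an arbitrary ring $R$.
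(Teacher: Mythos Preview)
Your proposal is correct and follows essentially the same approach as the paper: both invoke the identity $J(\Mat_n(R)) = \Mat_n(J(R))$ to identify $\Mat_n(R)/J(\Mat_n(R))$ with $\Mat_n(\oR)$, and then apply the general fact that units lift from $S/J(S)$ to $S$ for any ring $S$. The only difference is that you spell out the unit-lifting argument, whereas the paper simply asserts it.
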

\begin{proof}
We have $J(\Mat_n(R)) = \Mat_n(J(R))$ (see \cite[p.\ 61]{LamBook}), so $\overline{\Mat_n(R)} = \Mat_n(\oR)$.  
The result now follows from the fact that for {\em any} ring $S$ (including, in particular, $S = \Mat_n(R)$), 
an element $x \in S$ is invertible if and only if $\ox \in \oS$ is invertible (see \cite[Proposition 4.8]{LamBook}).
\end{proof}

\paragraph{Artin--Wedderburn.} 
The fact that $R$ is Artinian implies that $\oR$ is semisimple (see \cite[Theorem 4.14]{LamBook}),
which by the Artin--Wedderburn Theorem \cite[Theorem 3.5]{LamBook} means that
\begin{equation}
\label{eqn:artinwedderburn}
\oR \cong \Mat_{\mu_1}(\bbD_1) \times \cdots \times \Mat_{\mu_{q}}(\bbD_q)
\end{equation}
for division rings $\bbD_1,\ldots,\bbD_{q}$.
We remark that when $R$ is finite as it is in
most of this paper, Wedderburn's Little Theorem \cite[Theorem 13.1]{LamBook} implies that
the $\bbD_k$ are actually (commutative) fields.
The decomposition \eqref{eqn:artinwedderburn} arises from orthogonal idempotents $\oe_i^{k} \in \oR$ for
$1 \leq k \leq q$ and $1 \leq i \leq \mu_k$ satisfying
\begin{equation}
\label{eqn:artinwedderburnidem}
1 = (\oe^1_1+\cdots+\oe^1_{\mu_1})+\cdots+(\oe^q_1+\cdots+\oe^q_{\mu_q})
\end{equation}
as well as the following:
\begin{compactitem}
\item For $1 \leq k \leq q$ and $1 \leq i,j \leq \mu_k$, we have $\oe_i^k \oR \oe_j^k \cong \bbD_k$.  This is an isomorphism
of rings if $i = j$ (so $\oe_i^k \oR \oe_j^k$ is closed under multiplication), and an isomorphism of additive groups
if $i \neq j$.
\item For $1 \leq k,k' \leq q$ and $1 \leq i,j \leq \mu_k$ with $k \neq k'$, we have $\oe_i^k \oR \oe_j^{k'} = 0$.
\end{compactitem}
Setting $\mu = \mu_1+\cdots+\mu_q$, the Peirce embedding associated to \eqref{eqn:artinwedderburnidem} 
is precisely the embedding $\oPhi\colon \oR \hookrightarrow \Mat_{\mu}(\oR)$ taking an element of
$\oR$ to the matrices in \eqref{eqn:artinwedderburn}, arranged as diagonal blocks in $\Mat_{\mu}(\oR)$.  Here
we are identifying the $\bbD_k$ with appropriate subrings and additive subgroups of $\oR$ as above.

\paragraph{Lifting idempotents.}
Since $J(R)$ is nilpotent, idempotents in $\oR$ can be lifted to $R$ (see \cite[Theorem 21.28]{LamBook}; we remark
that a ring $R$ such that $\oR$ is semisimple and all idempotents in $\oR$ can be lifted to $R$ is called
{\em semiperfect}).
Combined with \cite[Proposition 21.25]{LamBook} and the proof of \cite[Theorem 23.6]{LamBook}, this implies
we can find orthogonal idempotents $e_i^k \in R$ for $1 \leq k \leq q$ and $1 \leq i \leq \mu_k$ lifting
the $\oe_i^k$ such that
\begin{equation}
\label{eqn:bigpeirce}
1= (e^1_1+\cdots+e^1_{\mu_1})+\cdots+(e^q_1+\cdots+e^q_{\mu_q}).
\end{equation}
What is more, by \cite[Proposition 21.21]{LamBook}, we have
\begin{equation}
\label{eqn:ourisomorphisms}
e_i^k R \cong e_{i'}^{k'} R \quad \Leftrightarrow \quad \oe_i^k \oR \cong \oe_{i'}^{k'} \oR \quad \Leftrightarrow \quad k=k'.
\end{equation}
For $1 \leq h,k \leq q$, pick some $1 \leq i \leq \mu_h$ and $1 \leq j \leq \mu_k$ and set
\[\bbL_{hk} = e_i^{h} R e_j^{k} \cong \Hom(e_i^k R, e_j^{h} R).\]
This is a ring if $i=j$ and $k = h$, and an additive group otherwise.
By \eqref{eqn:ourisomorphisms}, up to isomorphisms of rings or additive groups this does not depend on the choice of $i$ and $j$.
In particular, by taking $i = j$ we can identify $\bbL_{kk}$ with a ring.  Observe the following:
\begin{compactitem}
\item For distinct $h$ and $k$ the additive group $\bbL_{hk}$ projects to $0$ in $\oR$, so $\bbL_{hk}$ is an additive subgroup
of the Jacobson radical $J(R)$.
\item As in the previous paragraph, we can identify $\bbD_k$ with a corresponding subring of $\oR$ such that $\bbL_{kk}$ projects to
$\bbD_k$ in $\oR$.  By \cite[Theorem 19.1]{LamBook}, this implies that the rings $\bbL_{kk}$ are local rings, i.e., have a
unique maximal left ideal (or equivalently, a unique maximal right ideal; see \cite[Theorem 19.1]{LamBook}).
\end{compactitem}

\paragraph{Summary.}
Recall that $\mu = \mu_1+\cdots+\mu_q$.  Using the isomorphisms \eqref{eqn:ourisomorphisms}, the 
Peirce embedding $\Phi\colon R \hookrightarrow \Mat_{\mu}(R)$
associated to \eqref{eqn:bigpeirce} can be identified with a ring homomorphism that
takes $x \in R$ to a $q \times q$ block matrix of the form
\[\Phi(x) = \left(\Phi_{hk}(x)\right)_{h,k=1}^q \quad \text{with} \quad \Phi_{hk}(x) \in \Mat_{\mu_h,\mu_k}(\bbL_{hk}).\]
Here we are identifying the $\bbL_{hk}$ with appropriate subrings and additive subgroups of $R$.  Moreover, identifying
the $\bbD_k$ with appropriate subrings and additive subgroups of $\oR$, we have
\[\overline{\Phi(x)} = \overline{\Phi}(\overline{x}) = \left(\overline{\Phi_{hk}(x)}\right)_{h,k=1}^q \quad \text{with} \quad
\overline{\Phi_{hh}(x)} \in \Mat_{\mu_h}(\bbD_h) \text{ and } \overline{\Phi_{hk}(x)}=0 \text{ for $h \neq k$}.\]
We will call this the {\em Artin--Wedderburn embedding} of $R$.

\section{Ordered \texorpdfstring{$\VIC$}{VIC}: definition and basic properties}
\label{section:ovicbasic}

This section defines the subcategory $\OVIC(R)$ of $\VIC(R)$ and proves some basic facts about it.
We do this in two steps: in \S \ref{section:ovicsemisimple}, we deal with semisimple rings, and in
\S \ref{section:ovicgeneral} we deal with Artinian rings (and thus general finite rings).

\subsection{Ordered \texorpdfstring{$\VIC$}{VIC} for semisimple rings}
\label{section:ovicsemisimple}

We start by introducing the notation we will use in this section.
Let $R$ be a semisimple ring, so
\begin{equation}
\label{eqn:decompsemisimple}
R \cong \Mat_{\mu_1}(\bbD_1) \times \cdots \times \Mat_{\mu_q}(\bbD_q)
\end{equation}
for $\mu_1,\ldots,\mu_q \geq 1$ and division rings $\bbD_1,\ldots,\bbD_q$.
Set $\mu = \mu_1+\cdots+\mu_q$ and let $\Phi\colon R \hookrightarrow \Mat_{\mu}(R)$ be the Artin--Wedderburn
embedding of $R$.  For $x \in R$, the matrix $\Phi(x)$ thus consists of the matrices in
\eqref{eqn:decompsemisimple}, arranged as diagonal blocks in $\Mat_{\mu}(R)$.

\paragraph{An example.}
What we will do requires a lot of notation, so to help the reader follow it we start with an example.  Assume
that the decomposition \eqref{eqn:decompsemisimple} is of the form
\[R \cong \Mat_2(\bbD_1) \times \Mat_3(\bbD_2),\]
so the Artin--Wedderburn embedding of $R$ is of the form $\Phi\colon R \hookrightarrow \Mat_{5}(R)$.  Consider
an $R$-linear map $h\colon R^3 \rightarrow R^2$.  The map $h$ can be represented by a $2 \times 3$ matrix
with entries in $R$.  Applying $\Phi$ to the entries of this matrix results in a block matrix $\Phi(h)$ of the form
\[\Phi(h) = \left(\begin{array}{@{}ccccc|ccccc|ccccc@{}}
\thinstar_{11} & \thinstar_{12} &                 &                 &                 & \thinstar_{13} & \thinstar_{14} &            &            &                           & \thinstar_{15} & \thinstar_{16} &                 &                 &            \\
\thinstar_{21} & \thinstar_{22} &                 &                 &                 & \thinstar_{23} & \thinstar_{24} &            &            &                           & \thinstar_{25} & \thinstar_{26} &                 &                 &            \\
          &                     & \smallstar_{11} & \smallstar_{12} & \smallstar_{13} &                &                & \smallstar_{14} & \smallstar_{15} & \smallstar_{16} &                &                & \smallstar_{17} & \smallstar_{18} & \smallstar_{19} \\
          &                     & \smallstar_{21} & \smallstar_{22} & \smallstar_{23} &                &                & \smallstar_{24} & \smallstar_{25} & \smallstar_{26} &                &                & \smallstar_{27} & \smallstar_{28} & \smallstar_{29} \\
          &                     & \smallstar_{31} & \smallstar_{32} & \smallstar_{33} &                &                & \smallstar_{34} & \smallstar_{35} & \smallstar_{36} &                &                & \smallstar_{37} & \smallstar_{38} & \smallstar_{39} \\
\hline
\thinstar_{31} & \thinstar_{32} &                 &                 &                 & \thinstar_{33} & \thinstar_{34} &            &            &                           & \thinstar_{35} & \thinstar_{36} &                 &                 &            \\
\thinstar_{41} & \thinstar_{42} &                 &                 &                 & \thinstar_{43} & \thinstar_{44} &            &            &                           & \thinstar_{45} & \thinstar_{46} &                 &                 &            \\
          &                     & \smallstar_{41} & \smallstar_{42} & \smallstar_{43} &                &                & \smallstar_{44} & \smallstar_{45} & \smallstar_{46} &                &                & \smallstar_{47} & \smallstar_{48} & \smallstar_{49} \\
          &                     & \smallstar_{51} & \smallstar_{52} & \smallstar_{53} &                &                & \smallstar_{54} & \smallstar_{55} & \smallstar_{56} &                &                & \smallstar_{57} & \smallstar_{58} & \smallstar_{59} \\
          &                     & \smallstar_{61} & \smallstar_{62} & \smallstar_{63} &                &                & \smallstar_{64} & \smallstar_{65} & \smallstar_{66} &                &                & \smallstar_{67} & \smallstar_{68} & \smallstar_{69}
\end{array}\right),\]
where the $\thinstar_{ij}$ are elements of $\bbD_1$, the $\smallstar_{ij}$ are elements of $\bbD_2$, and the
blank entries are zeros.  Here we are identifying
the $\bbD_k$ with appropriate subrings and additive subgroups of $R$ as in the previous section.  Let $h_1$ and $h_2$ be the submatrices of $\Phi(h)$ consisting of entries lying in $\bbD_1$ and $\bbD_2$, respectively, so
\[h_1 = \left(\begin{array}{@{}cc|cc|cc@{}}
\thinstar_{11} & \thinstar_{12} & \thinstar_{13} & \thinstar_{14} & \thinstar_{15} & \thinstar_{16} \\
\thinstar_{21} & \thinstar_{22} & \thinstar_{23} & \thinstar_{24} & \thinstar_{25} & \thinstar_{26} \\
\hline
\thinstar_{31} & \thinstar_{32} & \thinstar_{33} & \thinstar_{34} & \thinstar_{35} & \thinstar_{36} \\
\thinstar_{41} & \thinstar_{42} & \thinstar_{43} & \thinstar_{44} & \thinstar_{45} & \thinstar_{46}
\end{array}\right)
\quad
h_2 = \left(\begin{array}{@{}ccc|ccc|ccc@{}}
\smallstar_{11} & \smallstar_{12} & \smallstar_{13} & \smallstar_{14} & \smallstar_{15} & \smallstar_{16} & \smallstar_{17} & \smallstar_{18} & \smallstar_{19} \\
\smallstar_{21} & \smallstar_{22} & \smallstar_{23} & \smallstar_{24} & \smallstar_{25} & \smallstar_{26} & \smallstar_{27} & \smallstar_{28} & \smallstar_{29} \\
\smallstar_{31} & \smallstar_{32} & \smallstar_{33} & \smallstar_{34} & \smallstar_{35} & \smallstar_{36} & \smallstar_{37} & \smallstar_{38} & \smallstar_{39} \\
\hline
\smallstar_{41} & \smallstar_{42} & \smallstar_{43} & \smallstar_{44} & \smallstar_{45} & \smallstar_{46} & \smallstar_{47} & \smallstar_{48} & \smallstar_{49} \\
\smallstar_{51} & \smallstar_{52} & \smallstar_{53} & \smallstar_{54} & \smallstar_{55} & \smallstar_{56} & \smallstar_{57} & \smallstar_{58} & \smallstar_{59} \\
\smallstar_{61} & \smallstar_{62} & \smallstar_{63} & \smallstar_{64} & \smallstar_{65} & \smallstar_{66} & \smallstar_{67} & \smallstar_{68} & \smallstar_{69}
\end{array}\right).\] 
Regard $\Phi(h)$ as an $R$-linear map $\Phi(h)\colon R^{15} \rightarrow R^{10}$.  Let
\[V_1 = \{\vec{v}(1)_1,\ldots,\vec{v}(1)_6\} \quad \text{and} \quad
V_2 = \{\vec{v}(2)_1,\ldots,\vec{v}(2)_9\}\]
be the standard basis elements of $R^{15}$ corresponding to the columns of $\Phi(h)$ where the entries must lie
in $\bbD_1$ and $\bbD_2$, respectively, ordered using the natural ordering on the standard basis elements of $R^{15}$.  In its
natural ordering, the standard basis for $R^{15}$ is thus
\begin{footnotesize}
\[\{\vec{v}(1)_1,\vec{v}(1)_2,\vec{v}(2)_1,\vec{v}(2)_2,\vec{v}(2)_3,\vec{v}(1)_3,\vec{v}(1)_4,\vec{v}(2)_4,\vec{v}(2)_5,\vec{v}(2)_6,\vec{v}(1)_5,\vec{v}(1)_6,\vec{v}(2)_7,\vec{v}(2)_8,\vec{v}(2)_9\}.\]
\end{footnotesize}
  Similarly, let
\[W_1 = \{\vec{w}(1)_1,\ldots,\vec{w}(1)_4\} \quad \text{and} \quad
W_2 = \{\vec{w}(2)_1,\ldots,\vec{w}(2)_6\}\]
be the standard basis elements of $R^{10}$ corresponding to the rows of $\Phi(h)$ where the entries must lie in $\bbD_1$ and $\bbD_2$, respectively.
For $k=1,2$, the linear map $\Phi(h)$ takes each element of $V_k$ to a linear combination of elements of $W_k$, and the resulting
linear map from the span of the $V_k$ to the span of the $W_k$ is described by the matrix $h_k$.  We will call
$V_1 \cup V_2$ and $W_1 \cup W_2$ the {\em distinguished bases} of $R^{15}$ and $R^{10}$, respectively.

\paragraph{Decomposing maps and labeling rows/columns.}
We now consider the general case, so $R$ is of the form
\begin{equation}
\label{eqn:decompsemisimple2}
R \cong \Mat_{\mu_1}(\bbD_1) \times \cdots \times \Mat_{\mu_q}(\bbD_q)
\end{equation}
for $\mu_1,\ldots,\mu_q \geq 1$ and division rings $\bbD_1,\ldots,\bbD_q$.
Set $\mu = \mu_1+\cdots+\mu_q$ and let $\Phi\colon R \hookrightarrow \Mat_{\mu}(R)$ be the Artin--Wedderburn
embedding of $R$. Consider an $R$-linear map $h\colon R^m \rightarrow R^n$.  Regard $h$ as an $n \times m$ matrix
with entries in $R$, and let $\Phi(h)\colon R^{\mu m} \rightarrow R^{\mu n}$ be the linear map obtained by
applying $\Phi$ to each entry in this matrix.  Just as above, for $1 \leq k \leq q$ we can extract
submatrices $h_k\colon R^{\mu_k m} \rightarrow R^{\mu_k n}$ of $\Phi(h)$ consisting of the entries
of $\Phi(h)$ that are required to lie in $\bbD_k$, which is identified with appropriate subrings and additive
subgroups of $R$.  

The {\em labeling} on the rows of $\Phi(h)$ is the association of the pair $(k,i)$ with
$1 \leq k \leq q$ and $i \in \{1,\ldots,\mu_k n\}$ to the row of $\Phi(h)$ corresponding to the $i^{\text{th}}$
row of $h_k$.  Similarly, the {\em labeling} on the columns of $\Phi(h)$ is the association of the pair $(k,j)$
with $1 \leq k \leq q$ and $j \in \{1,\ldots,\mu_k m\}$ to the column of $\Phi(h)$ corresponding to the $j^{\text{th}}$
row of $h_k$.

\paragraph{Distinguished bases.}
We will need notation for the collections of basis elements of $R^{\mu m}$ and $R^{\mu n}$ corresponding
to these submatrices.  The {\em distinguished basis of $R^{\mu m}$} is defined as follows.  For each $1 \leq k \leq q$, let
$\{\vec{v}(k)_{1},\ldots,\vec{v}(k)_{\mu_k m}\}$ be the portion of the standard basis of $R^{\mu m}$
corresponding to the columns of $\Phi(h)$ that are labeled by $(k,j)$ for some $j \in \{1,\ldots,\mu_k m\}$,
arranged in their natural increasing order.  In its natural ordering, the standard basis for $R^{\mu m}$ is thus
\[\vec{v}(1)_1,\ldots,\vec{v}(1)_{\mu_1},\vec{v}(2)_1,\ldots,\vec{v}(2)_{\mu_2},\ldots,\vec{v}(q)_1,\ldots,\vec{v}(q)_{\mu_q}\]
followed by
\[\vec{v}(1)_{\mu_1+1},\ldots,\vec{v}(1)_{\mu_1+\mu_1},\vec{v}(2)_{\mu_2+1},\ldots,\vec{v}(2)_{\mu_2+\mu_2},\ldots,\vec{v}(q)_{\mu_q+1},\ldots,\vec{v}(q)_{\mu_q+\mu_q},\]
etc.,\ finally ending with
\[\vec{v}(1)_{(m-1)\mu_1+1},\ldots,\vec{v}(1)_{(m-1)\mu_1+\mu_1},\ldots,\vec{v}(q)_{(m-1)\mu_q+1},\ldots,\vec{v}(q)_{(m-1)\mu_q+\mu_q}.\]
Similarly, the {\em distinguished basis of $R^{\mu n}$} is defined by letting
$\{\vec{w}(k)_1,\ldots,\vec{w}(k)_{\mu_k n}\}$ for $1 \leq k \leq q$ be the portion of the standard basis
of $R^{\mu n}$ corresponding to the rows of $\Phi(h)$ labeled by $(k,i)$ for some $i \in \{1,\ldots,\mu_k n\}$,
arranged in their natural increasing order. 
For all $1 \leq k \leq q$ and $0 \leq j \leq \mu_k m$, we thus have
\[\Phi(h)(\vec{v}(k)_j) \subset \bigoplus_{i=1}^{\mu_k n} \vec{w}(k)_i \cdot \bbD_k.\]

\paragraph{Surjective maps.}
Now assume that $h\colon R^m \rightarrow R^n$ is a surjective $R$-linear map.  The maps 
$h_k\colon \bbD_k^{\mu_k m} \rightarrow \bbD_k^{\mu_k n}$ discussed above are thus also surjective.
Recall that linear algebra over division rings is similar to linear algebra over fields.  In particular,
notions of basis, dimension, etc.\ make sense in this noncommutative context.  
Considerations of dimension show that there exists some subset $S \subset \{1,\ldots,\mu_k m\}$ such that
$\Set{$\Phi(h)(\vec{v}(k)_i)$}{$i \in S$}$ is a basis for the $\bbD_k$-submodule of
$R^{\mu_k n}$ spanned by $\{\vec{w}(k)_1,\ldots,\vec{w}(k)_{\mu_k n}\}$.  Define
$\fS(h,k)$ to be the smallest such $S \subset \{1,\ldots,\mu_k m\}$ in the lexicographic ordering.
In this ordering, for distinct
$S_1,S_2 \subset \{1,\ldots,\mu_k m\}$ we have $S_1 < S_2$ if the minimal element in the symmetric
difference of the $S_i$ lies in $S_1$.  
The following lemma gives an alternate characterization
of $\fS(h,k)$:

\begin{lemma}
\label{lemma:identifys}
Let $h\colon R^m \rightarrow R^n$ be a surjective $R$-linear map.  For $1 \leq k \leq q$, write
$\fS(h,k) = \{j_1 < j_2 < \cdots < j_{\mu_k n}\}$.  Then the $j_i$ are the unique elements of
$\{1,\ldots,\mu_k m\}$ satisfying the following two conditions:
\begin{compactitem}
\item $\{\Phi(h)(\vec{v}(k)_{j_1}),\ldots,\Phi(h)(\vec{v}(k)_{j_{\mu_k n}})\}$
is a basis for the $\bbD_k$-module
\[\bigoplus_{i=1}^{\mu_k n} \vec{w}(k)_i \cdot \bbD_k.\]
\item Consider $1 \leq j \leq \mu_k m$, and let $1 \leq i_0 \leq \mu_k n$
be the largest index such that $j_{i_0} \leq j$.  Then
\[\Phi(h)(\vec{v}(k)_j) \in \bigoplus_{i=1}^{i_0} \Phi(h)(\vec{v}(k)_{j_i}) \cdot \bbD_k.\]
\end{compactitem}
\end{lemma}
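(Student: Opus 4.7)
The plan is to prove this as a standard matroid/greedy-characterization result: the lex-minimal subset $\fS(h,k)$ agrees with the greedy basis-selection, and both conditions in the lemma express exactly this greediness. Throughout I will drop the $(k)$ and write $\vec{v}_j := \vec{v}(k)_j$ and $\vec{w}_i := \vec{w}(k)_i$ for brevity, set $W := \bigoplus_{i=1}^{\mu_k n} \vec{w}_i \cdot \bbD_k$, and abbreviate $u_j := \Phi(h)(\vec{v}_j) \in W$. Since $h$ is surjective, so is $h_k$, so $\{u_1,\dots,u_{\mu_k m}\}$ spans $W$ as a $\bbD_k$-module; in particular $W$ has $\bbD_k$-dimension $\mu_k n$, and the maximal linearly independent subsets of $\{u_1,\dots,u_{\mu_k m}\}$ have size exactly $\mu_k n$.

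First I would establish the existence half by identifying $\fS(h,k)$ with the greedy selection. Define $j_1 < j_2 < \cdots$ recursively by letting $j_{i+1}$ be the smallest index strictly greater than $j_i$ (starting with $j_1$ the smallest index at all) such that $u_{j_{i+1}}$ is not in the $\bbD_k$-span of $u_{j_1},\dots,u_{j_i}$. By the matroid exchange property for linear independence over the division ring $\bbD_k$, this greedy sequence produces a lex-minimal subset of $\{1,\ldots,\mu_k m\}$ of size $\mu_k n$ whose images form a basis of $W$, i.e.\ $\fS(h,k) = \{j_1<\cdots<j_{\mu_k n}\}$. Condition (1) then holds by definition. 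For condition (2), fix $j$ and let $i_0$ be the largest index with $j_{i_0} \leq j$. If $j = j_{i_0}$ then $u_j$ is trivially in the required span. Otherwise $j_{i_0} < j < j_{i_0+1}$ (or $i_0 = \mu_k n$, in which case the span is all of $W$ and there is nothing to prove); then by the greedy choice of $j_{i_0+1}$, no index strictly between $j_{i_0}$ and $j_{i_0+1}$ has image outside the span of $u_{j_1},\dots,u_{j_{i_0}}$, so $u_j$ lies in that span as required.

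Next I would prove uniqueness by induction on $i$. Suppose $j'_1 < \cdots < j'_{\mu_k n}$ satisfies both conditions. For the base case, applying condition (2) to any $j < j'_1$ (where $i_0$ does not exist, so the span is zero) gives $u_j = 0$; meanwhile $u_{j'_1} \neq 0$, since otherwise the collection in (1) would contain $0$ and fail to be a basis. Hence $j'_1$ is the smallest index with nonzero image, which is exactly $j_1$. For the inductive step, assuming $j'_\ell = j_\ell$ for $\ell \leq i$, condition (2) applied to any $j$ with $j_i < j < j'_{i+1}$ forces $u_j$ into the span of $u_{j_1},\dots,u_{j_i}$, while $u_{j'_{i+1}}$ cannot lie in this span (otherwise $u_{j'_1},\dots,u_{j'_{i+1}}$ would be $\bbD_k$-linearly dependent, contradicting the basis condition (1)). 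Thus $j'_{i+1}$ equals the next greedy choice $j_{i+1}$, completing the induction.

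The only mildly delicate point is the matroid-exchange observation at the start, namely that the greedy construction produces the lex-minimal basis; however, since linear independence over a division ring defines a matroid in exactly the same way as over a field, this is standard and requires no new input. The rest is bookkeeping with the distinguished bases; the main care needed is to handle the boundary indices ($j < j_1$ and $j > j_{\mu_k n}$) correctly in condition (2), which is why I would phrase the inductive step in terms of the open interval $(j_i, j'_{i+1})$ rather than trying to manipulate $i_0$ directly.
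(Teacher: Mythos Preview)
Your argument is correct. The paper's own proof of this lemma is the single word ``Immediate,'' so there is no approach to compare against; you have simply written out the standard greedy-equals-lex-minimal argument that the authors left to the reader. One small remark: in your base case you invoke condition~(2) for $j < j'_1$, where no $i_0$ exists, and interpret the right-hand side as the zero module. This is indeed the intended reading (and is necessary for the lemma to be true, as otherwise uniqueness would genuinely fail), but since the lemma as stated does not explicitly cover that boundary case, it would be worth flagging that you are using the empty-sum convention there rather than treating it as a literal application of the printed hypothesis.
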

\begin{proof}
Immediate.
\end{proof}

\paragraph{Column-adapted maps.}
This allows us to make the following definition.  A surjective $R$-linear map $h\colon R^m \rightarrow R^n$ 
is {\em column-adapted} if it satisfies the following condition for each $1 \leq k \leq q$.
Write $\fS(h,k) = \{j_1 < j_2 < \cdots < j_{\mu_k n}\}$.  We then require that
$\Phi(h)(\vec{v}(k)_{j_i}) = \vec{w}(k)_{i}$ for all $1 \leq i \leq \mu_k n$.  In other words,
the matrix corresponding to the linear map $h_k\colon \bbD_k^{\mu_k m} \rightarrow \bbD_k^{\mu_k n}$ discussed
above is in reduced row echelon form.  

\begin{example}
Let us return to the example in the beginning of this section where
\[R \cong \Mat_2(\bbD_1) \times \Mat_3(\bbD_2).\]
An $h\colon R^3 \rightarrow R^2$ satisfying the following is column-adapted:
\[\Phi(h) = \left(\begin{array}{@{}ccccc|ccccc|ccccc@{}}
1 & \thinstar &   &   &            & 0 & \thinstar &   &   &            & 0 & 0 &   &   &   \\
0 &         0 &   &   &            & 1 & \thinstar &   &   &            & 0 & 0 &   &   &   \\
  &           & 0 & 1 & \smallstar &   &           & 0 & 0 & \smallstar &   &   & 0 & 0 & 0 \\
  &           & 0 & 0 &          0 &   &           & 1 & 0 & \smallstar &   &   & 0 & 0 & 0 \\
  &           & 0 & 0 &          0 &   &           & 0 & 1 & \smallstar &   &   & 0 & 0 & 0 \\
\hline
0 &         0 &   &   &            & 0 &         0 &   &   &            & 1 & 0 &  &    &   \\
0 &         0 &   &   &            & 0 &         0 &   &   &            & 0 & 1 &  &    &   \\
  &           & 0 & 0 &          0 &   &           & 0 & 0 &          0 &   &   & 1 & 0 & 0 \\
  &           & 0 & 0 &          0 &   &           & 0 & 0 &          0 &   &   & 0 & 1 & 0 \\
  &           & 0 & 0 &          0 &   &           & 0 & 0 &          0 &   &   & 0 & 0 & 1
\end{array}\right).\]
Here the $\thinstar$ are elements of $\bbD_1$, the $\smallstar$ are elements of $\bbD_2$, and the
blank entries are zeros.  The $\fS(h,k)$ are $\fS(h,1) = \{1,3,5,6\}$ and $\fS(h,2) = \{2,4,5,7,8,9\}$.
\end{example}

As is well-known, the set
of reduced row echelon matrices is closed under multiplication, and similarly the class
of column-adapted maps is closed under composition:

\begin{lemma}
\label{lemma:composecoladapted}
Let $g \colon R^m \rightarrow R^n$ and $h \colon R^n \rightarrow R^{\ell}$ be column-adapted
maps.  Then $h \circ g \colon R^m \rightarrow R^{\ell}$ is column-adapted.
\end{lemma}
\begin{proof}
Let $\vec{v}(k)_i$ and $\vec{w}(k)_i$ and $\vec{u}(k)_i$ be the distinguished bases for
$R^{\mu m}$ and $R^{\mu n}$ and $R^{\mu \ell}$, respectively.  Fix some $1 \leq k \leq q$, and write
\begin{align*}
\fS(g,k) &= \{j_1 < j_2 < \cdots < j_{\mu_k n}\}, \\
\fS(h,k) &= \{j'_1 < j'_2 < \cdots < j'_{\mu_k \ell}\}.
\end{align*}
For $1 \leq i \leq \mu_k \ell$, define $j''_i = j_{j'_i}$.  We thus have
\begin{equation}
\label{eqn:jprimeprime}
\{j''_1 < j''_2 < \cdots < j''_{\mu_k \ell}\}
\end{equation}
and
\[h \circ g(\vec{v}(k)_{j''_i}) = h \circ g(\vec{v}(k)_{j_{j'_i}}) = h(\vec{w}(k)_{j'_i}) = \vec{u}(k)_i.\]
From this, it is easy to see that \eqref{eqn:jprimeprime} satisfies the criterion
of Lemma \ref{lemma:identifys}, so $\fS(h \circ g,k)$ equals \eqref{eqn:jprimeprime} and
$h \circ g$ is column-adapted.
\end{proof}

For later use, we record a corollary of the proof of this lemma.

\begin{lemma}
\label{lemma:ordersfunction}
Let $m \geq n \geq \ell$ and $1 \leq k \leq q$.  Order subsets of $\{1,\ldots,\mu_k m\}$ and
$\{1,\ldots,\mu_k n\}$ lexicographically.
Let $f\colon R^m \rightarrow R^n$ and $g,h\colon R^n \rightarrow R^{\ell}$ be column-adapted maps
such that $\fS(g,k) \leq \fS(h,k)$.  Then $\fS(g \circ f,k) \leq \fS(h \circ f,k)$, with equality if
and only if $\fS(g,k) = \fS(h,k)$.
\end{lemma}
\begin{proof}
Immediate from the proof of Lemma \ref{lemma:composecoladapted}.
\end{proof}

\paragraph{Ordered VIC, semisimple case.}
By Lemma \ref{lemma:composecoladapted}, it makes sense to define $\OVIC(R)$ to be the subcategory of $\VIC(R)$ whose
objects are all the $R^n$ with $n \geq 1$ and whose morphisms $f\colon [R^n] \rightarrow [R^m]$
are all the $\VIC(R)$-morphisms $f = (f',f'')$ such that $f''$ is column-adapted.  Since
the only column-adapted maps $R^n \rightarrow R^n$ are the identity, it follows that
the identity is the only $\OVIC(R)$-endomorphism of $[R^n]$.  In the next section, we will
show how to generalize all of this to the case of Artinian $R$, and thus in particular to
all finite $R$.

\subsection{Ordered \texorpdfstring{$\VIC$}{VIC} for general Artinian rings}
\label{section:ovicgeneral}

Let $R$ be an Artinian ring.  The structure of $R$ was discussed in \S \ref{section:artinianrings}, and we
briefly recall it.  The quotient ring $\oR = R/J(R)$ is semisimple, so
\[\oR \cong \Mat_{\mu_1}(\bbD_1) \times \cdots \times \Mat_{\mu_q}(\bbD_q)\]
for $\mu_1,\ldots,\mu_q \geq 1$ and division rings $\bbD_1,\ldots,\bbD_q$.
Set $\mu = \mu_1+\cdots+\mu_q$.  Let $\Phi\colon R \hookrightarrow \Mat_{\mu}(R)$ and
$\oPhi\colon \oR \hookrightarrow \Mat_{\mu}(\oR)$ be the Artin--Wedderburn
embeddings of $R$ and $\oR$, so $\overline{\Phi(x)} = \oPhi(\ox)$ for all $x \in R$.  Also,
for $1 \leq h,k \leq q$ let $\bbL_{hk}$ be as defined in \S \ref{section:artinianrings},
so the $\bbL_{kk}$ are local rings and the $\obbL_{hk}$ for distinct $h$ and $k$ are additive subgroups of the Jacobson radical $J(R)$.
The Artin--Wedderburn embedding $\Phi\colon R \hookrightarrow \Mat_{\mu}(R)$ can then be decomposed
into a $q \times q$ block matrix of the form
\[\Phi(x) = \left(\Phi_{hk}(x)\right)_{h,k=1}^q \quad \text{with} \quad \Phi_{hk}(x) \in \Mat_{\mu_h,\mu_k}(\bbL_{hk}),\]
and
\[\overline{\Phi(x)} = \overline{\Phi}(\overline{x}) = \left(\overline{\Phi_{hk}(x)}\right)_{h,k=1}^q \quad \text{with} \quad
\overline{\Phi_{hh}(x)} \in \Mat_{\mu_h}(\bbD_h) \text{ and } \overline{\Phi_{hk}(x)}=0 \text{ for $h \neq k$}.\]
Here the $\bbL_{hk}$ are embedded as appropriate subrings and additive subgroups of $R$, and similarly the
$\bbD_k$ are embedded as appropriate subrings and additive subgroups of $\oR$.

\paragraph{Distinguished bases and labeling rows/columns.}
Consider an $R$-linear map $h\colon R^m \rightarrow R^n$.  Let $\oh\colon \oR^m \rightarrow \oR^n$ be
the induced map, and let $\Phi(h)\colon R^{\mu m} \rightarrow R^{\mu n}$ and
$\oPhi(\oh)\colon \oR^{\mu m} \rightarrow \oR^{\mu n}$ be the maps obtained by applying $\Phi$ and $\oPhi$
to the entries of matrices representing $h$ and $\oh$, respectively.  As we discussed in \S \ref{section:ovicsemisimple},
the rows of $\oPhi(\oh)$ are labeled by pairs $(k,i)$ with $1 \leq k \leq q$ and $i \in \{1,\ldots,\mu_k n\}$
and the columns of $\oPhi(\oh)$ are labeled by pairs $(k,j)$ with $1 \leq k \leq q$ and $j \in \{1,\ldots,\mu_k m\}$.
We will similarly label the rows and columns of $\Phi(h)$.

For $0 \leq k \leq q$, let
\begin{equation}
\label{eqn:overlinebase}
\{\overline{\vec{v}(k)_1},\ldots,\overline{\vec{v}(k)_{\mu_k m}}\} \quad \text{and} \quad
\{\overline{\vec{w}(k)_1},\ldots,\overline{\vec{w}(k)_{\mu_k n}}\}
\end{equation}
be the distinguished bases for $\oR^{\mu m}$ and $\oR^{\mu n}$ discussed in \S \ref{section:ovicsemisimple}.
These were introduced to make sense of $\oPhi(\oh)$.  We will need the exact same bases
for $R^{\mu m}$ and $R^{\mu n}$, so let
\[\{\vec{v}(k)_1,\ldots,\vec{v}(k)_{\mu_k m}\} \quad \text{and} \quad
\{\vec{w}(k)_1,\ldots,\vec{w}(k)_{\mu_k n}\}\]
be the subsets of the standard bases for $R^{\mu m}$ and $R^{\mu n}$ that map to
\eqref{eqn:overlinebase} under the maps $R^{\mu m} \rightarrow \oR^{\mu m}$ and $R^{\mu n} \rightarrow \oR^{\mu n}$.
For all $1 \leq k \leq q$ and $1 \leq j \leq \mu_k m$, we thus have
\begin{equation}
\label{eqn:mapdistinguish}
\Phi(h)(\vec{v}(k)_j) \in \bigoplus_{k'=1}^q \left(\bigoplus_{i=1}^{\mu_{k'} n} \vec{w}(k')_i \cdot \bbL_{k'k}\right).
\end{equation}

\paragraph{S-function.}
Given a surjective map $h\colon R^m \rightarrow R^n$, the induced map $\oh\colon \oR^m \rightarrow \oR^n$
is also surjective.  For $1 \leq k \leq q$, we define 
\[\fS(h,k) = \fS(\oh,k) \subset \{1,\ldots,\mu_k m\},\]
so $|\fS(h,k)| = \mu_k n$.

\paragraph{Column-adapted maps.}
A surjective map $h\colon R^m \rightarrow R^n$ is said to be {\em column-adapted} if it satisfies
the following two conditions:
\begin{compactitem} 
\item[(i)] The map $\oh\colon \oR^m \rightarrow \oR^n$ is column-adapted in the sense of \S \ref{section:ovicsemisimple}.
\item[(ii)] For each $1 \leq k \leq q$, write $\fS(h,k) = \{j_1 < j_2 < \cdots < j_{\mu_k n}\}$.  We then require that
$\Phi(h)(\vec{v}(k)_{j_i}) = \vec{w}(k)_{i}$ for all $1 \leq i \leq \mu_k n$.
\end{compactitem}
This class of maps is closed under composition:

\begin{lemma}
\label{lemma:composecoladapted2}
Let $h_1\colon R^m \rightarrow R^n$ and $h_2\colon R^n \rightarrow R^{\ell}$ be column-adapted
maps.  Then $h_2 \circ h_1\colon R^m \rightarrow R^{\ell}$ is column-adapted.
\end{lemma}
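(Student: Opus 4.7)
The plan is to verify the two defining conditions of column-adaptedness for $h := h_2 \circ h_1$ separately. For condition (i), I would reduce to the semisimple case (Lemma \ref{lemma:composecoladapted}); for condition (ii), I would use the exact pivot identities supplied by condition (ii) of $h_1$ and $h_2$ and compose them, exploiting that $\Phi$ is a ring homomorphism.

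For condition (i), I would first note that reduction modulo $J(R)$ commutes with composition, so $\overline{h_2 \circ h_1} = \oh_2 \circ \oh_1$. Since $h_1$ and $h_2$ are column-adapted, $\oh_1$ and $\oh_2$ are column-adapted in the semisimple sense, so Lemma \ref{lemma:composecoladapted} applied to $\oR$ shows that $\overline{h_2 \circ h_1}$ is column-adapted. This also pins down the sets $\fS(h,k)$: writing $\fS(h_1,k) = \{j_1 < \cdots < j_{\mu_k n}\}$ and $\fS(h_2,k) = \{j'_1 < \cdots < j'_{\mu_k \ell}\}$, the semisimple proof gives $\fS(h,k) = \{j''_1 < \cdots < j''_{\mu_k \ell}\}$ with $j''_i := j_{j'_i}$.

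For condition (ii), I would fix $1 \leq k \leq q$ and $1 \leq i \leq \mu_k \ell$ and compute $\Phi(h)(\vec{v}(k)_{j''_i})$ directly. Condition (ii) for $h_1$ gives the exact equality $\Phi(h_1)(\vec{v}(k)_{j_{j'_i}}) = \vec{w}(k)_{j'_i}$. Applying condition (ii) for $h_2$ --- where the distinguished basis of $R^{\mu n}$ now plays the role of the domain basis, since it is intrinsic to $R$ and $n$ and independent of the particular map --- gives $\Phi(h_2)(\vec{w}(k)_{j'_i}) = \vec{u}(k)_i$. Since $\Phi$ is a ring homomorphism, $\Phi(h) = \Phi(h_2)\Phi(h_1)$, so the two identities compose to yield $\Phi(h)(\vec{v}(k)_{j''_i}) = \vec{u}(k)_i$, establishing condition (ii).

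The point to be careful about is that, in the Artinian setting (unlike the semisimple one), $\Phi(h_1)(\vec{v}(k)_j)$ can a priori have off-diagonal-block components in $\vec{w}(h)_i \cdot \bbL_{hk}$ for $h \neq k$, by \eqref{eqn:mapdistinguish}. The key observation, however, is that condition (ii) provides an \emph{exact} equality on each pivot index $j \in \fS(h_1,k)$, with no Jacobson-radical error terms. This exactness is precisely what prevents off-block contributions from entering the composition, so the argument cascades through unchanged from the semisimple case. I therefore do not anticipate any genuine obstacle beyond careful bookkeeping of the distinguished bases across the three free modules $R^m$, $R^n$, $R^{\ell}$.
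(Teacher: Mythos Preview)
Your proposal is correct and follows essentially the same approach as the paper: reduce condition (i) to the semisimple case via Lemma~\ref{lemma:composecoladapted}, then verify condition (ii) by composing the exact pivot identities $\Phi(h_1)(\vec{v}(k)_{j_{j'_i}}) = \vec{w}(k)_{j'_i}$ and $\Phi(h_2)(\vec{w}(k)_{j'_i}) = \vec{u}(k)_i$, which is exactly ``the same argument used in the proof of Lemma~\ref{lemma:composecoladapted}'' that the paper invokes. Your explicit remark that condition (ii) gives exact equalities (with no off-block Jacobson-radical terms) on the pivot indices is the right reason the semisimple computation transfers verbatim.
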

\begin{proof}
By Lemma \ref{lemma:composecoladapted}, the map $\overline{h_2 \circ h_1} = \oh_2 \circ \oh_1$ is column-adapted,
so condition (i) is satisfied for $h_2 \circ h_1$.  The same argument used in the proof of Lemma \ref{lemma:composecoladapted}
then shows that condition (ii) is satisfied for $h_2 \circ h_1$.  The lemma follows.
\end{proof}

\paragraph{Canonical splittings.}
One of the key features of column-adapted maps is the following lemma.  We will call the map
$g$ constructed in it the {\em canonical splitting} of $h$; as the lemma says, it only
depends on the $\fS(h,k)$.

\begin{lemma}
\label{lemma:canonicalsplitting}
For each $1 \leq k \leq q$, let $S(k) \subset \{1,\ldots,\mu_k m\}$ be a $\mu_k n$-element
set.  There then exists an $R$-linear map $g\colon R^n \rightarrow R^m$ such that if
$h\colon R^m \rightarrow R^n$ is a column-adapted map with $\fS(h,k) = S(k)$ for all $1 \leq k \leq q$,
then $h \circ g = \text{id}$.
\end{lemma}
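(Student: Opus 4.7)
My plan is to translate the column-adapted condition into Peirce language and exploit the structural rigidity it imposes on $S(k)$. Let $\xi_{B,k,p}\in R^n$ denote the column vector with $e_p^k$ in the $B$-th slot and zeros elsewhere; from the Peirce decomposition $R=\bigoplus_{k',p'}e_{p'}^{k'}R$ we obtain $R^n=\bigoplus_{B,k,p}\xi_{B,k,p}R$ as right $R$-modules, where $B\in[n]$, $k\in[q]$, $p\in[\mu_k]$. Under the natural correspondence $\vec{v}(k)_j\leftrightarrow\xi_{A,k,l}\in R^m$ (for $j=(A-1)\mu_k+l$) and $\vec{w}(k)_i\leftrightarrow\xi_{B,k,p}\in R^n$ (for $i=(B-1)\mu_k+p$) obtained by scaling the standard basis vectors of $R^{\mu m}$ and $R^{\mu n}$ by the appropriate idempotents to land in the Peirce images of $R^m$ and $R^n$, the column-adapted equation $\Phi(h)(\vec{v}(k)_{s_i^k})=\vec{w}(k)_i$ becomes the exact equality
\[
h(\xi_{A^*,k,l^*})=\xi_{B,k,p}\quad\text{in } R^n,
\]
where $s_i^k=(A^*-1)\mu_k+l^*$.

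The key observation is that this equation forces $l^*=p$. Matching the $B$-th coordinate yields $h_{B,A^*}e_{l^*}^k=e_p^k$ in $R$; the left-hand side lies in $Re_{l^*}^k$, the right in $Re_p^k$, and these are orthogonal summands of the right Peirce decomposition $R=\bigoplus_{k',p'}Re_{p'}^{k'}$ whenever $l^*\neq p$. Hence if some $s_i^k\not\equiv i\pmod{\mu_k}$, no column-adapted $h$ with $\fS(h,k)=S(k)$ exists and the conclusion of the lemma is vacuous; otherwise, for each $(B,k,p)$ there is a unique $A^*(k,B,p)\in[m]$ with $s_{(B-1)\mu_k+p}^k=(A^*(k,B,p)-1)\mu_k+p$.

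I then define $g\colon R^n\to R^m$ on each summand $\xi_{B,k,p}R$ by
\[
g(\xi_{B,k,p}):=\begin{cases}\xi_{A^*(k,B,p),k,p}&\text{if }s_{(B-1)\mu_k+p}^k\equiv p\pmod{\mu_k},\\ 0&\text{otherwise.}\end{cases}
\]
This depends only on the data $\{S(k)\}_k$, and in both cases $g(\xi_{B,k,p})\in R^m\cdot e_p^k$, which is the unique condition needed to extend $g$ $R$-linearly over the cyclic summand $\xi_{B,k,p}R$ (whose only relation is $\xi_{B,k,p}\cdot e_p^k=\xi_{B,k,p}$). For a column-adapted $h$ with $\fS(h,k)=S(k)$, the rigidity forces $l^*=p$ throughout, so $g(\xi_{B,k,p})=\xi_{A^*,k,l^*}$ and $(h\circ g)(\xi_{B,k,p})=h(\xi_{A^*,k,l^*})=\xi_{B,k,p}$ by the displayed equation; since both $h\circ g$ and $\id_{R^n}$ are $R$-linear and agree on the generating set $\{\xi_{B,k,p}\}$, they coincide.

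The main challenge is the first two paragraphs: carefully recasting the standard-basis equation $\Phi(h)(\vec{v}(k)_{s_i^k})=\vec{w}(k)_i$ into the Peirce picture and extracting the congruence $s_i^k\equiv i\pmod{\mu_k}$, which is the rigidity that makes the construction work. Once this is visible the lemma reduces to a direct verification on a Peirce generating set for $R^n$.
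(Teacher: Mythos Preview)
Your ``key observation'' that the column-adapted condition forces $s_i^k \equiv i \pmod{\mu_k}$ is false, and with it the claim that the lemma is vacuous whenever this congruence fails. Take $R=\Mat_2(\bbD)$ for a field $\bbD$, so $q=1$, $\mu_1=2$; let $m=2$, $n=1$, and let $h\colon R^2\to R$ be the map with matrix $(E_{12},E_{21})$. Under the Artin--Wedderburn identification $\Phi$ this is the $\bbD$-matrix $\left(\begin{smallmatrix}0&1&0&0\\0&0&1&0\end{smallmatrix}\right)$, which is column-adapted with $\fS(h,1)=\{2,3\}$. Here $s_1^1=2\not\equiv 1\pmod 2$, yet a column-adapted map with this $\fS$ visibly exists; your construction would set $g=0$ and fail to split $h$. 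The error is in the translation step: the equation $\Phi(h)(\vec{v}(k)_{s_i^k})=\vec{w}(k)_i$ is a statement about the $\bbL_{kk}$-entries of the Artin--Wedderburn matrix, and the embedding $\Phi$ bakes in the right-module isomorphisms $e_p^kR\cong e_1^kR$. Unwinding these, what one actually obtains at the level of $R^n$ is $h(\xi_{A^*,k,l^*})=\xi_{B,k,p}\cdot u$ for a specific element $u\in e_p^kRe_{l^*}^k$ implementing that isomorphism, not $\xi_{B,k,p}$ itself. In the example above, $h(\xi_{1,1,2})=E_{12}\cdot e_2=E_{12}\neq e_1=\xi_{1,1,1}$. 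Consequently the Peirce-component argument ``$h_{B,A^*}e_{l^*}^k=e_p^k$ forces $l^*=p$'' never gets off the ground.

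The paper avoids this pitfall by working entirely at the level of $R^{\mu n}$ and $R^{\mu m}$: it simply sets $G(\vec{w}(k)_i)=\vec{v}(k)_{j(k)_i}$, observes that the resulting matrix has its entries in the correct pieces $\bbL_{hk}$ (trivially, since each column has a single entry equal to $1_{\bbL_{kk}}$), and therefore equals $\Phi(g)$ for some $R$-linear $g$. No congruence on $S(k)$ is needed and no case is vacuous. If you want to run your Peirce-style argument, you must carry the isomorphism units $u_{p,l^*}$ through the construction of $g$; once you do, the map you build will agree with the paper's $g$.
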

\begin{proof}
Let $\vec{v}(k)_i$ and $\vec{w}(k)_i$ be the distinguished bases for $R^{\mu m}$ and $R^{\mu n}$,
respectively.  For $1 \leq k \leq q$, write
\[S(k) = \{j(k)_1,\ldots,j(k)_{\mu_k n}\}.\]
Define $G\colon R^{\mu n} \rightarrow R^{\mu m}$ via the formula
\[G(\vec{w}(k)_i) = \vec{v}(k)_{j(k)_i} \quad \quad (1 \leq k \leq q, 1 \leq i \leq \mu_k n).\]
Since for all $1 \leq k \leq q$ and $1 \leq i \leq \mu_k n$ we trivially have
\[G(\vec{w}(k)_i) \in \bigoplus_{k'=1}^q\left(\bigoplus_{j=1}^{\mu_{k'} m} \vec{v}(k')_j \cdot \bbL_{k' k}\right),\]
it follows that there exists some $g\colon R^n \rightarrow R^m$ with $\Phi(g) = G$.  If
$h\colon R^m \rightarrow R^n$ is a column-adapted map with $\fS(h,k) = S(k)$ for all $1 \leq k \leq q$, then
for all $1 \leq k \leq q$ and $1 \leq i \leq \mu_k n$ we have
\[\Phi(h) \circ \Phi(g)(\vec{w}(k)_i) = \Phi(h)(\vec{v}_{j(k)_i}) = \vec{w}(k)_i,\]
so $\Phi(h) \circ \Phi(g) = \text{id}$ and thus $h \circ g = \text{id}$.
\end{proof}

\paragraph{Ordered VIC, Artinian case.}
By Lemma \ref{lemma:composecoladapted2}, it makes sense to define $\OVIC(R)$ to be the subcategory of $\VIC(R)$ whose
objects are all the $R^n$ with $n \geq 0$ and whose morphisms $f\colon [R^n] \rightarrow [R^m]$
are all the $\VIC(R)$-morphisms $f = (f',f'')$ such that $f''$ is column-adapted.  Since
the only column-adapted maps $R^n \rightarrow R^n$ are the identity, it follows that
the identity is the only $\OVIC(R)$-endomorphism of $[R^n]$.  

\paragraph{Factoring VIC-morphisms.}
The following proposition verifies part (b) of
Theorem \ref{theorem:orderedsummary}:

\begin{proposition}
\label{proposition:factorovic}
Let $R$ be an Artinian ring.  Every $\VIC(R)$-morphism $f\colon [R^d] \rightarrow [R^n]$ can be factored as
\[[R^d] \stackrel{f_1}{\longrightarrow} [R^d] \stackrel{f_2}{\longrightarrow} [R^n],\]
where $f_1\colon [R^d] \rightarrow [R^d]$ is a $\VIC(R)$-morphism and $f_2\colon [R^d] \rightarrow [R^n]$ is an
$\OVIC(R)$-morphism.
\end{proposition}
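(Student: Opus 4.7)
The plan is to reduce the factoring problem to producing a single automorphism of $R^d$ that ``normalizes'' $f''$. First, observe that a $\VIC(R)$-morphism $[R^d] \to [R^d]$ is necessarily of the form $(\alpha,\alpha^{-1})$ with $\alpha \in \GL_d(R)$: if $(\alpha_1,\alpha_2)$ is such a morphism, then $\alpha_2\alpha_1 = \text{id}$, so $\overline{\alpha_2}\,\overline{\alpha_1} = \text{id}$ in $\End(\oR^d)$; since $\oR$ is semisimple, $\overline{\alpha_1}$ is invertible there, and Lemma~\ref{lemma:checkinvertible} (applied to the matrix algebra $\Mat_d(R)$, using $J(\Mat_d(R))=\Mat_d(J(R))$) lifts this to $\alpha_1 \in \GL_d(R)$ with $\alpha_2=\alpha_1^{-1}$. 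Using the $\VIC(R)$ composition rule $(\beta_1,\beta_2)\circ(\alpha,\alpha^{-1})=(\beta_1\alpha,\alpha^{-1}\beta_2)$, matching against $(f',f'')$ forces $\beta_2=\alpha f''$ and $\beta_1=f'\alpha^{-1}$; then $\beta_1$ is automatically injective and $\beta_2\beta_1=\text{id}$, so it suffices to find $\alpha \in \GL_d(R)$ for which $\alpha f''$ is column-adapted.

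To construct $\alpha$, for each $1 \le k \le q$ set $S(k) = \fS(f'',k) = \{j(k)_1 < \cdots < j(k)_{\mu_k d}\}$, and let $g\colon R^d \to R^n$ be the canonical splitting produced by Lemma~\ref{lemma:canonicalsplitting}, so that $\Phi(g)(\vec{w}(k)_i) = \vec{v}(k)_{j(k)_i}$. I claim that $f''g \in \End_R(R^d)$ is invertible; granting this, define $\alpha = (f''g)^{-1}$. For the claim, compute
\[
  \Phi(f''g)(\vec{w}(k)_i) \;=\; \Phi(f'')(\vec{v}(k)_{j(k)_i}) \;=:\; c_{k,i},
\]
and reduce mod $J(R)$. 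The definition of $\fS(\overline{f''},k)$ in the semisimple setting says that $\{\overline{c_{k,i}}\}_{i=1}^{\mu_k d}$ is a $\bbD_k$-basis of $\bigoplus_{i=1}^{\mu_k d}\overline{\vec{w}(k)_i}\cdot\bbD_k$, and for distinct $k$ these $\overline{c_{k,i}}$ lie in the different Artin--Wedderburn summands of $\oR^{\mu d}$. Hence $\overline{f''g}$ is an automorphism of $\oR^d$, and Lemma~\ref{lemma:checkinvertible} promotes this to invertibility of $f''g$ in $\End_R(R^d)$.

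Finally, write $\tilde h = \alpha f''$. The identity $\tilde h g = \text{id}$ yields
\[
  \Phi(\tilde h)(\vec{v}(k)_{j(k)_i}) \;=\; \Phi(\tilde h)\bigl(\Phi(g)(\vec{w}(k)_i)\bigr) \;=\; \vec{w}(k)_i,
\]
which is condition (ii) in the definition of column-adapted. Post-composition by the invertible $\overline{\alpha}$ preserves the pivot sets of the $\bbD_k$-linear maps $\overline{f''}_k$, so $\fS(\tilde h,k)=S(k)$; reducing the displayed identity modulo $J(R)$ then gives condition (i), namely that $\overline{\tilde h}$ is column-adapted in the semisimple sense. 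I expect the main obstacle to be the invertibility of $f''g$: this is where the Artin--Wedderburn block structure must be used precisely, and where both Lemma~\ref{lemma:checkinvertible} and the defining property of $\fS$ come together. Once that is in hand, the rest is essentially bookkeeping.
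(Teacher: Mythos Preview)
Your argument is correct and is essentially the paper's proof in different clothing: your composite $f''g$ (with $g$ the canonical splitting of Lemma~\ref{lemma:canonicalsplitting}) is precisely the map the paper calls $g$, defined there directly by $G(\vec{w}(k)_i)=\Phi(f'')(\vec{v}(k)_{j(k)_i})$, and your $\alpha=(f''g)^{-1}$ is the paper's $g^{-1}$. The paper then sets $f_1=(g^{-1},g)$ and $f_2=(f'g,\,g^{-1}f'')$ and asserts ``by construction, $g^{-1}\circ f''$ is column-adapted''; you supply the missing verification of that last clause (the pivot-preservation under left multiplication by an invertible $\overline{\alpha}$ and the identity $\tilde h g=\text{id}$), which is a welcome addition rather than a departure.
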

\begin{proof}
Write $f = (f',f'')$, where $f'\colon R^d \rightarrow R^n$ is an injection and
$f''\colon R^n \rightarrow R^d$ is a splitting of $f'$, so $f'' \circ f' = \text{id}$.

Let $\vec{v}(k)_i$ and $\vec{w}(k)_i$ be the distinguished bases of $R^{\mu n}$ and $R^{\mu d}$,
respectively.  Also, write
\[\fS(f'',k) = \{j(k)_1 < \cdots < j(k)_{\mu_k d}\} \subset \{1,\ldots,\mu_k n\}.\]
Define $G\colon R^{\mu d} \rightarrow R^{\mu d}$ via the formula
\[G(\vec{w}(k)_i) = \Phi(f'')(\vec{v}(k)_{j(k)_i}) \quad \quad (1 \leq k \leq q, 1 \leq i \leq \mu_k d).\]
Using \eqref{eqn:mapdistinguish} for
$h = f''$, for $1 \leq k \leq q$ and $1 \leq i \leq \mu_k d$ we have
\[G(\vec{w}(k)_i) \in \bigoplus_{k'=1}^q \left(\bigoplus_{j=1}^{\mu_{k'} d} \vec{w}(k')_j \cdot \bbL_{k' k}\right).\]
From this, we see that there exists some $g\colon R^d \rightarrow R^d$ such that $G = \Phi(g)$.

Since the columns of $\Phi(\og)$ are a basis for $\oR^{\mu d}$, it
follows that $\og$ is an isomorphism, so by Lemma \ref{lemma:checkinvertible} it follows that
$g$ is an isomorphism.  By construction, the map $g^{-1} \circ f''$ is column-adapted, so
$f_2 = (f' \circ g, g^{-1} \circ f'')$ is an $\OVIC(R)$-morphism.  Setting
$f_1 = (g^{-1},g)$, the map $f_1$ is a $\VIC(R)$-morphism and $f = f_2 \circ f_1$, as
desired.
\end{proof}

\paragraph{Free and dependent rows.}
Consider an $\OVIC(R)$-morphism $f\colon [R^n] \rightarrow [R^m]$ with $f = (f',f'')$.  The
condition that $f''$ is column-adapted is a condition on the columns of $\Phi(f'') \in \Mat_{\mu n, \mu m}(R)$.  We now
discuss the rows of $\Phi(f') \in \Mat_{\mu m, \mu n}(R)$.  We will call the rows of $\Phi(f')$ that
are labeled $(k,i)$ for some $1 \leq k \leq q$ and $i \in \fS(f'',k) \subset \{1,\ldots,\mu_k m\}$ the {\em dependent rows},
and all the other rows will be called the {\em free rows}.  The reason for this terminology is the following 
lemma:

\begin{lemma}
\label{lemma:comparefree}
Let $R$ be an Artinian ring.  Consider $\OVIC(R)$-morphisms $f_1,f_2\colon [R^n] \rightarrow [R^m]$ with
$f_i = (f_i',f_i'')$.  Assume that $f_1'' = f_2''$ and that the free rows of $\Phi(f_1')$ and $\Phi(f_2')$
are equal.  Then $f_1 = f_2$.
\end{lemma}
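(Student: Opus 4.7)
The plan is to show that $\Phi(f_1')=\Phi(f_2')$; since $\Phi$ is injective this gives $f_1'=f_2'$, and combined with $f_1''=f_2''$ this yields $f_1=f_2$.  Set $f''=f_1''=f_2''$.  By hypothesis the free rows of $\Phi(f_1')$ and $\Phi(f_2')$ already agree, so it suffices to check that their dependent rows agree as well.  I would do this column by column of $\Phi(f_i')\colon R^{\mu n}\to R^{\mu m}$.

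Index the columns of $\Phi(f_i')$ by the standard basis $\{\vec{w}(k)_j\}$ of $R^{\mu n}$, and set $v^{(i)}_{k,j}:=\Phi(f_i')(\vec{w}(k)_j)\in R^{\mu m}$.  The splitting relation $f_i''\circ f_i'=\text{id}$ becomes $\Phi(f'')(v^{(i)}_{k,j})=\vec{w}(k)_j$.  Decompose $v^{(i)}_{k,j}=v^{\text{free}}+v^{(i),\text{dep}}$ according to the partition of the basis $\{\vec{v}(k')_\ell\}$ of $R^{\mu m}$ into free-row and dependent-row vectors.  The free-row part $v^{\text{free}}$ is independent of $i$ by the hypothesis of the lemma, so applying $\Phi(f'')$ and rearranging yields
\[\Phi(f'')\bigl(v^{(i),\text{dep}}\bigr)\;=\;\vec{w}(k)_j-\Phi(f'')(v^{\text{free}}),\]
and the right-hand side does not depend on $i$.

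The crux of the argument is the observation that $\Phi(f'')$ restricted to the right $R$-span of the dependent-row basis vectors $\{\vec{v}(k')_\ell:1\le k'\le q,\ \ell\in\fS(f'',k')\}$ is an $R$-module isomorphism onto $R^{\mu n}$.  This is immediate from condition (ii) in the definition of column-adapted: writing $\fS(f'',k')=\{j(k')_1<\cdots<j(k')_{\mu_{k'}n}\}$, that condition says $\Phi(f'')(\vec{v}(k')_{j(k')_\ell})=\vec{w}(k')_\ell$, so $\Phi(f'')$ carries this family of dependent-row basis vectors bijectively onto the standard basis of $R^{\mu n}$.  By right $R$-linearity the restriction is an isomorphism.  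Consequently the displayed equation uniquely determines $v^{(i),\text{dep}}$, so $v^{(1),\text{dep}}=v^{(2),\text{dep}}$.  Since this holds for every $(k,j)$, we obtain $\Phi(f_1')=\Phi(f_2')$, as desired.

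I expect no real obstacle beyond bookkeeping: the entire content is that column-adaptedness is precisely what makes the restriction of $\Phi(f'')$ to the dependent-row directions invertible.  The only thing requiring care is keeping the two distinguished bases straight — the basis $\{\vec{v}(k')_\ell\}$ of $R^{\mu m}$, which indexes the rows of $\Phi(f')$ and is partitioned into free and dependent pieces using $\fS(f'',k')$, versus the basis $\{\vec{w}(k)_j\}$ of $R^{\mu n}$, which indexes the columns.
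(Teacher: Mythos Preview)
Your proposal is correct and follows essentially the same approach as the paper: the dependent rows of $\Phi(f_i')$ are determined by the free rows together with the splitting condition $\Phi(f'')\circ\Phi(f_i')=\text{id}$, precisely because condition~(ii) in the definition of column-adapted forces $\Phi(f'')$ to send the dependent-row basis vectors bijectively onto the standard basis of $R^{\mu n}$.  The paper's proof is in fact just an illustrative matrix example of this phenomenon rather than a formal argument, so your write-up is more detailed than what appears there.
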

\begin{proof}
What this lemma is saying is that the dependent rows of $\Phi(f_i')$ are determined by the free rows
together with the fact that $f_i'' \circ f_i' = \text{id}$.  This is immediate from Lemma \ref{lemma:comparefreematrix}
below.
\end{proof}

\begin{lemma}
\label{lemma:comparefreematrix}
Let $R$ be any ring.  For some $a \leq b$, let $X \in \Mat_{a,b}(R)$ and $Y,Y' \in \Mat_{b,a}(R)$ be matrices
such that $X Y = \text{id}$ and $X Y' = \text{id}$.  Also, let $I \subset [b]$ be a set
with $|I| = a$ such that the submatrix of $X$ consisting of the columns of $X$ lying in $I$ is a permutation
matrix, i.e., can be transformed into the identity matrix by reordering its columns.  Assume
that the submatrices of $Y$ and $Y'$ consisting of the rows lying in $[b] \setminus I$ are equal.
Then $Y = Y'$.
\end{lemma}
\begin{proof}
This is a simple fact about matrix multiplication
that is easier to grasp from an example rather than a formal proof: if for instance we have
\[X =
\left(\begin{matrix}
\ast & 0 & 0 & \ast & 1 & \ast \\
\ast & 1 & 0 & \ast & 0 & \ast \\
\ast & 0 & 1 & \ast & 0 & \ast
\end{matrix}\right)
\quad \text{and} \quad
Y = 
\left(\begin{matrix}
\ast     & \ast     & \ast \\
\diamond & \diamond & \diamond \\
\diamond & \diamond & \diamond \\
\ast     & \ast     & \ast \\
\diamond & \diamond & \diamond \\
\ast     & \ast     & \ast
\end{matrix}\right),\]
then the $\diamond$-entries of $Y$ are determined by the $\ast$-entries of $X$ and $Y$ along
with the fact that $X Y = \text{id}$.
\end{proof}

\section{Ordered \texorpdfstring{$\VIC$}{VIC}: local Noetherianity}
\label{section:ovicnoetherian}

The goal of this section is to prove that the category of $\OVIC(R)$-modules over $\bk$ is locally
Noetherian for a finite ring $R$ and a left Noetherian ring $\bk$.  This is proved in \S \ref{section:ovicproof}, which is
preceded by two preliminary sections: \S \ref{section:wellorders} discusses well partial
orders and \S \ref{section:ordering} constructs a specific ordering that is needed for
the proof.

\subsection{Well partial orders}
\label{section:wellorders}

Let $(\fP,\preceq)$ be a poset.  We say that $\fP$ is {\em well partially ordered} if for any infinite sequence
\[p_1, p_2, p_3, \ldots \quad \quad (p_i \in \fP),\]
we can find indices 
$i_1 < i_2 < i_3 < \cdots$
such that
\begin{equation}
\label{eqn:bigsequence}
p_{i_1} \preceq p_{i_2} \preceq p_{i_3} \preceq \cdots.
\end{equation}
In fact, it is enough to just prove that 
\begin{equation}
\label{eqn:smallsequence}
\text{there exist indices $i<j$ with $p_i \preceq p_j$}.
\end{equation}
Here is a quick proof of this.  Letting
$I = \Set{$i$}{there does not exist $j>i$ with $p_j \succeq p_i$}$,
if $I$ is infinite then it provides a sequence of elements of $\fP$ violating \eqref{eqn:smallsequence},
so $I$ must be finite
and we can find the sequence \eqref{eqn:bigsequence} starting with any index larger than all the indices in $I$.

We will need the following specific well partial ordering.
Fix a finite set $\Sigma$, and let $\Sigma^{\ast}$ be the set of words $s_1 \cdots s_p$ whose letters $s_i$ are
in $\Sigma$.  Define a partial ordering on $\Sigma^{\ast}$ by saying that $s_1 \cdots s_p \preceq t_1 \cdots t_q$
if there exists a strictly increasing function $\lambda \colon \{1,\ldots,p\} \rightarrow \{1,\ldots,q\}$ with the
following two properties:
\begin{compactitem}
\item $s_i = t_{\lambda(i)}$ for $1 \leq i \leq p$, and
\item for all $1 \leq j \leq q$, there exists some $1 \leq i \leq p$ such that $\lambda(i) \leq j$ and $t_{\lambda(i)} = t_j$.
\end{compactitem}
We then have the following theorem, which is a variant on Higman's Lemma \cite{HigmanLemma}.

\begin{lemma}[{\cite[Proposition 8.2.1]{SamSnowdenGrobner}}]
\label{lemma:words}
For all finite sets $\Sigma$, the ordering $(\Sigma^{\ast},\preceq)$ is a well partial ordering.
\end{lemma}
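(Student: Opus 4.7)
The plan is to verify the simpler criterion \eqref{eqn:smallsequence}: for any infinite sequence $w_1, w_2, \ldots \in \Sigma^{\ast}$ I will produce indices $i < j$ with $w_i \preceq w_j$. The strategy is to reduce to the classical Higman Lemma (the variant in which only the first condition on $f$ is imposed) via two pigeonhole steps.

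The first step rests on the observation that $s \preceq t$ forces $A(s) = A(t)$, where $A(w) \subseteq \Sigma$ denotes the set of letters appearing in $w$, and moreover that the ``first-occurrence word'' $\phi(w) \in \Sigma^{\ast}$ (obtained by reading off the letter of $w$ at each position where a new letter first appears) satisfies $\phi(s) = \phi(t)$. Indeed, taking $j = m$ in the second condition on $f$ shows $A(t) \subseteq A(s)$, and an inductive inspection of the first-occurrence positions of $t$ shows they must lie in the image of $f$ and correspond under $f^{-1}$ bijectively to the first-occurrence positions of $s$. Since $A(w)$ and $\phi(w)$ together take only finitely many values, by pigeonhole I may pass to an infinite subsequence on which the alphabet is a fixed set $\{a_1, \ldots, a_q\}$ and the first-occurrence word is the fixed permutation $a_1 a_2 \cdots a_q$. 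Every such $w_i$ then has a unique ``marker decomposition''
\[
  w_i = a_1 \cdot u_{i, 1} \cdot a_2 \cdot u_{i, 2} \cdots a_q \cdot u_{i, q}, \qquad u_{i, k} \in \{a_1, \ldots, a_k\}^{\ast}.
\]

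The second step applies the classical Higman Lemma to each alphabet $\{a_1, \ldots, a_k\}$ to conclude that every sequence $(u_{i, k})_{i \geq 1}$ is well partially ordered under the classical Higman embedding. Since a finite product of well partial orders is again a well partial order, I can find $i < j$ such that $u_{i, k}$ embeds in $u_{j, k}$ in the classical sense \emph{simultaneously} for every $k = 1, \ldots, q$.

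Finally, I combine these component embeddings into a single $f \colon \{1, \ldots, |w_i|\} \to \{1, \ldots, |w_j|\}$ witnessing $w_i \preceq w_j$ in the modified order by sending the marker of $a_k$ in $w_i$ to the marker of $a_k$ in $w_j$ and transporting each block $u_{i, k}$ into $u_{j, k}$ via the Higman embedding. The letter-matching condition is immediate. The coverage condition also follows easily: a marker position of $a_k$ in $w_j$ is the image of the corresponding marker in $w_i$, and any non-marker position of $w_j$ lying in the block $u_{j, k}$ carries some letter $a_{\ell}$ with $\ell \leq k$, for which the marker of $a_{\ell}$ in $w_i$ serves as witness, since its image is the marker of $a_{\ell}$ in $w_j$, which sits strictly earlier than the block $u_{j, k}$. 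The only potentially delicate point is the bookkeeping of the marker decomposition, but the pigeonhole reduction to a fixed first-occurrence word makes it transparent.
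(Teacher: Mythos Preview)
The paper does not prove this lemma; it simply cites \cite[Proposition 8.2.1]{SamSnowdenGrobner} and notes an alternate reference in \cite{draismakuttler}. So there is no ``paper's own proof'' to compare against, and your task is really to supply an independent argument.

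Your argument is correct. The observation that $s \preceq t$ forces $A(s)=A(t)$ and $\phi(s)=\phi(t)$ is valid (though, strictly speaking, you do not need it for the pigeonhole step---it is enough that $(A(w),\phi(w))$ ranges over a finite set). The marker decomposition $w = a_1 u_1 a_2 u_2 \cdots a_q u_q$ with $u_k \in \{a_1,\ldots,a_k\}^{\ast}$ is well-defined once the first-occurrence word is fixed, the product-of-WPOs reduction to the classical Higman lemma is standard, and your verification that the glued map $f$ is strictly increasing and satisfies the coverage condition is sound: for a non-marker position in block $u_{j,k}$ carrying letter $a_\ell$ with $\ell \leq k$, the image of the $a_\ell$-marker in $w_i$ lands at the $a_\ell$-marker in $w_j$, which precedes that block. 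One minor edge case worth a sentence is the empty word: $\epsilon \preceq w$ forces $w=\epsilon$ by the coverage condition, so if infinitely many $w_i$ are empty you are done immediately, and otherwise you discard the finitely many empty terms before pigeonholing.

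In short: your proof is a clean, self-contained reduction to the classical Higman lemma, and it does more than the paper, which defers to the literature.
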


\begin{remark}
An alternate proof of Lemma \ref{lemma:words} can be found in \cite[Proof of Prop. 7.5]{draismakuttler}.
\end{remark}

\subsection{An ordering of the generators}
\label{section:ordering}

The key to our proof that the category of $\OVIC(R)$-modules over $\bk$ is locally Noetherian for a finite
ring $R$ and a left Noetherian ring $\bk$ is the following
lemma.

\begin{lemma}
\label{lemma:constructorder}
Let $R$ be a finite ring and let $d \geq 0$.  Define
\[\fP(d) = \bigsqcup_{n=0}^{\infty} \Hom_{\OVIC(R)}(R^d,R^n).\]
There then exists a well partial ordering $\preceq$ on $\fP(d)$ along with an extension $\leq$ of $\preceq$
to a total ordering such that the following holds.  Consider $\OVIC(R)$-morphisms $f\colon [R^d] \rightarrow [R^n]$
and $g\colon [R^d] \rightarrow [R^m]$ with $f \preceq g$.  There then exists an $\OVIC(R)$-morphism 
$\phi\colon [R^n] \rightarrow [R^m]$ with the following two properties:
\begin{compactitem}
\item[(i)] $g = \phi \circ f$, and
\item[(ii)] if $h\colon [R^d] \rightarrow [R^{n}]$ is an $\OVIC(R)$-morphism such that $h<f$, then 
\[\phi \circ h < \phi \circ f = g.\]
\end{compactitem}
\end{lemma}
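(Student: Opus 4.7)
The plan is to encode $\OVIC(R)$-morphisms out of $[R^d]$ as words over a finite alphabet and pull back Higman's well partial order from Lemma \ref{lemma:words}. By Lemma \ref{lemma:comparefree}, such a morphism $f$ is uniquely determined by $f''$ together with the free rows of $\Phi(f')$, so I would take $\Sigma$ to be the finite set of ``outer column block letters'': each letter records, for each of the $\mu$ positions $(k,i)$ in one outer block (with $1 \le k \le q$, $1 \le i \le \mu_k$), either that the position is distinguished in the $k$-track, or else that the position is free together with the corresponding column of $\Phi(f'')$ in $R^{\mu d}$ and the corresponding free row of $\Phi(f')$ in $R^{\mu d}$. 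Reading off $f \in \Hom_{\OVIC(R)}([R^d],[R^n])$ block by block gives an injection $w\colon \fP(d) \hookrightarrow \Sigma^{\ast}$ with $|w(f)| = n$. I would set $\preceq$ to be the pullback along $w$ of Higman's order on $\Sigma^{\ast}$; this is a well partial order by Lemma \ref{lemma:words}. Fix any total order on $\Sigma$ and let $\leq$ be the pullback of shortlex on $\Sigma^{\ast}$. Because any Higman-comparable pair of equal-length words must be equal (the only strictly increasing injection $\{1,\ldots,n\} \to \{1,\ldots,n\}$ is the identity), shortlex extends Higman's order, so $\leq$ extends $\preceq$.

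Now suppose $f \preceq g$ is witnessed by a strictly increasing $\iota\colon \{1,\ldots,n\} \to \{1,\ldots,m\}$ satisfying Higman's matching and refill conditions. A counting argument shows every $g$-distinguished position lies at an image position of $\iota$: the letter matching forces the number of $g$-distinguished sub-positions at image outer position $\iota(j)$ to equal that at outer position $j$ of $f$, and summing gives $|\fS(g'',k)| \ge \mu_k d = |\fS(g'',k)|$, so equality holds and no non-image position of $g$ is distinguished. In particular the ranks of distinguished positions of $f$ and $g$ agree under $\iota$. I would then construct $\phi''\colon R^m \to R^n$ column by column: at image outer position $\iota(j)$, the matched letter lets me set the corresponding outer block of columns of $\Phi(\phi'')$ so that $\Phi(f'')\cdot \Phi(\phi'')$ reproduces the $\iota(j)$-th outer column block of $\Phi(g'')$; at a non-image outer position $i$, the refill supplies some $j^{\ast}(i) \in \{1,\ldots,n\}$ with $\iota(j^{\ast}(i)) \le i$ whose letter in $w(g)$ equals that at $i$, and I would set the $i$-th outer column block of $\Phi(\phi'')$ analogously through the $j^{\ast}(i)$-th outer column block of $\Phi(f'')$. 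A direct verification then shows that the entries of $\Phi(\phi'')$ lie in the required $\bbL_{hk}$ subgroups, that $\overline{\phi''}$ is column-adapted in the semisimple sense, that $\fS(\phi'',k)$ is the image-in-track set, and that $f'' \circ \phi'' = g''$. Finally, I would define $\phi'$ as a splitting of $\phi''$ with $\phi'(f'(x)) = g'(x)$; the matching of free rows ensures the free rows of $\Phi(\phi' \circ f')$ and of $\Phi(g')$ agree, and Lemma \ref{lemma:comparefree} then forces $\phi' \circ f' = g'$, giving property (i).

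For property (ii), the crucial point is that $w(\phi \circ h)$ is related to $w(h)$ by a fixed splicing operation determined entirely by $\phi$: at an image position $\iota(j)$ of $[R^m]$, the letter of $w(\phi \circ h)$ is the $j$-th letter of $w(h)$, while at a non-image position $i$ the letter is determined by $\phi$ and by the letter of $w(h)$ at the refill witness $j^{\ast}(i) < \iota^{-1}(i')$ for any later image $\iota(i')$. Since $h,f$ both have codomain $[R^n]$, $|w(h)| = |w(f)| = n$, and $h < f$ amounts to $w(h) <_{\text{lex}} w(f)$. Applying the same splicing to both preserves strict lex inequality: if $j_0$ is the first position of disagreement between $w(h)$ and $w(f)$, then the spliced words agree at every position of $\{1,\ldots,m\}$ that is either a non-image depending on letters at earlier positions $j < j_0$, or an image position $\iota(j)$ with $j < j_0$; the first disagreement is at $\iota(j_0)$ with inequality inherited. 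The main obstacle I anticipate is the detailed construction of $\phi''$: simultaneously enforcing column-adaptedness (both the $\bbL_{hk}$ block constraint and the semisimple condition on $\overline{\phi''}$) and the equation $f'' \circ \phi'' = g''$ requires careful bookkeeping to translate the word-level matching into the correct block entries, and it is precisely the refill property that makes this coherent across all outer positions.
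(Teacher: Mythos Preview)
Your overall strategy—encode $\OVIC(R)$-morphisms as words over a finite alphabet and pull back Higman's order—is exactly what the paper does, and your alphabet is essentially equivalent to the paper's. The wpo part and the total-order extension are fine. The genuine gap is in the construction of $\phi$ and the verification of (ii).

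The problem is that your sketch of $\phi''$ (which implicitly sets the $i$-th outer column of $\phi''$ to a standard basis vector $e_j$ or $e_{j^\ast(i)}$) does give a column-adapted map with $f''\circ\phi''=g''$, but it does \emph{not} admit a splitting $\phi'$ satisfying both $\phi''\circ\phi'=\id$ and $\phi'\circ f'=g'$. These two conditions together force $\phi''\circ g'=f'$, and that fails. Concretely, take $R=\Field_3$, $q=\mu_1=d=1$, $n=2$, $m=3$, $f''=(1,1)$, $f'=(0,1)^T$, and let $g$ be obtained by duplicating the second letter: $g''=(1,1,1)$, $g'=(2,1,1)^T$. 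Your $\phi''=\left(\begin{smallmatrix}1&0&0\\0&1&1\end{smallmatrix}\right)$ satisfies $f''\phi''=g''$, but $\phi''g'=(2,2)^T\neq(0,1)^T=f'$, so no compatible $\phi'$ exists. Thus your claim ``define $\phi'$ as a splitting of $\phi''$ with $\phi'(f'(x))=g'(x)$'' cannot be carried out for this $\phi''$.

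The paper fixes this by reducing to single-step duplications and building $\phi$ explicitly via the \emph{canonical splitting} of Lemma~\ref{lemma:canonicalsplitting}: the inserted column of $\phi''$ is not a standard basis vector but $\hfc=\psi(\fc)$, and $\phi'$ is given by an explicit formula that subtracts $\hfc$ from the $a$-th column before re-inserting the duplicated row. This is precisely what makes both $\phi''\circ\phi'=\id$ and $\phi'\circ f'=g'$ hold simultaneously. Once $\phi$ is built this way, however, its action on a general $h$ is \emph{not} the letter-wise ``splicing'' you describe (the inserted column of $h''\circ\phi''$ is $h''(\hfc)$, which depends on all of $h''$), so your shortlex argument for (ii) does not apply. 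The paper instead uses a carefully layered total order (first $\fS$-tuples, then columns of $\Phi(f'')$, then free rows of $\Phi(f')$) and checks (ii) by a three-case analysis matched to that layering. You would need either to discover the canonical-splitting construction of $\phi$ and then redo (ii) against a suitable total order, or to find an entirely different $\phi$ compatible with your splicing picture—which the counterexample above shows cannot be the naive one.
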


\begin{remark}
\label{remark:extend}
A total ordering $\leq$ that extends a well partial ordering $\preceq$ (as in Lemma \ref{lemma:constructorder}) is
a well-ordering.
\end{remark}

\begin{proof}[Proof of Lemma \ref{lemma:constructorder}]
The notation will be as in \S \ref{section:ovicgeneral}.  Our finite ring $R$ is Artinian,
so $\oR = R/J(R)$ is semisimple and
\[\oR \cong \Mat_{\mu_1}(\bbD_1) \times \cdots \times \Mat_{\mu_q}(\bbD_q)\]
for $\mu_1,\ldots,\mu_q \geq 1$ and division rings $\bbD_1,\ldots,\bbD_q$.
Set $\mu = \mu_1+\cdots+\mu_q$.  Let $\Phi\colon R \hookrightarrow \Mat_{\mu}(R)$ and
$\oPhi\colon \oR \hookrightarrow \Mat_{\mu}(\oR)$ be the Artin--Wedderburn
embeddings of $R$ and $\oR$, so $\overline{\Phi(x)} = \oPhi(\ox)$ for all $x \in R$.

\begin{step}{1}
\label{step:total}
We construct the total order $\leq$ on $\fP(d)$.
\end{step}

Fix an arbitrary total order on $R^{\mu d}$.  Consider $\OVIC(R)$-morphisms $f\colon [R^d] \rightarrow [R^n]$
and $g\colon [R^d] \rightarrow [R^m]$ in $\fP(d)$.  Write $f = (f',f'')$ and $g = (g',g'')$.  We
determine if $f < g$ via the following procedure:
\begin{compactitem}
\item If $n<m$, then $f < g$.
\item Otherwise, assume that $n=m$.  For each $1 \leq k \leq q$, we have the
$\mu_k d$-element subsets $\fS(f'',k)$ and $\fS(g'',k)$ of $\{1,\ldots,\mu_k m\}$.  Order
subsets of $\{1,\ldots,\mu_k m\}$ lexicographically, and
then further order tuples $(I_1,\ldots,I_q)$ with $I_k \subset \{1,\ldots,\mu_k m\}$ lexicographically.
If
\[(\fS(f'',1),\ldots,\fS(f'',q)) < (\fS(g'',1),\ldots,\fS(g'',q))\]
using this order, then $f<g$.
\item Otherwise, assume that $n=m$ and that $\fS(f'',k) = \fS(g'',k)$ for all $1 \leq k \leq q$.  
Compare the columns of matrices in $\Mat_{\mu d, \mu n}(R)$ using 
our fixed total order on $R^{\mu d}$ and the lexicographic order.  If under
this ordering the columns of $\Phi(f'')$ are less than the columns of $\Phi(g'')$, then $f<g$.
\item Otherwise, assume that $n=m$ and that $f''=g''$.  Compare the free rows
of $\Phi(f') \in \Mat_{\mu n, \mu d}(R)$ and $\Phi(g') \in \Mat_{\mu n, \mu d}(R)$ using
our fixed total order on $R^{\mu d}$ and the lexicographic order.  If under this ordering
the free rows of $\Phi(f')$ are less than the rows of $\Phi(g')$, then $f<g$.
\end{compactitem}
By Lemma \ref{lemma:comparefree}, this determines a total order $\leq$ on $\fP(d)$.

\begin{step}{2}
\label{step:stabilize}
We define the notion of a stabilization of an $\OVIC(R)$-morphism.
\end{step}

Let $f\colon [R^d] \rightarrow [R^n]$ be an $\OVIC(R)$-morphism and let $1 \leq a \leq b \leq n$.  An
{\em $(a,b)$-stabilization} of $f$ (or simply a {\em stabilization} if we do not want to specify $a$ and
$b$) is an $\OVIC(R)$-morphism $g\colon [R^d] \rightarrow [R^{n+1}]$
related to $f$ as follows.  Write $f = (f',f'')$ and $g = (g',g'')$.  Regard
$f'\colon R^d \rightarrow R^n$ and $f'' \colon R^n \rightarrow R^d$ and $g'\colon R^d \rightarrow R^{n+1}$
and $g'' \colon R^{n+1} \rightarrow R^d$ as matrices.  We then require the following three
conditions to hold:
\begin{compactitem}
\item $g''$ is obtained from $f''$ by inserting a copy of the $a^{\text{th}}$ column of $f''$ after the
$b^{\text{th}}$ column.
\item Let $\hg'\colon R^d \rightarrow R^{n+1}$ be the matrix obtained from $f'$ by inserting a copy of
the $a^{\text{th}}$ row of $f'$ after the $b^{\text{th}}$ row.  We then require
all the free rows\footnote{We cannot require $g' = \hg'$ since we need $g'' \circ g' = \text{id}$, which 
requires changing the dependent rows.} of $\Phi(g')$ to equal the corresponding rows of $\Phi(\hg')$.
\item None of the dependent rows of $\Phi(f')$ are contained in rows coming from the $a^{\text{th}}$
row of $f'$.
\end{compactitem}
Note that these three conditions imply that $\fS(f'',k) = \fS(g'',k)$ for all $1 \leq k \leq q$. 
We remark that Lemma \ref{lemma:comparefree} implies that an $(a,b)$-stabilization is unique if
it exists.  Technically, our proof does not require proving that it exists, but we remark that
its existence can be derived from the argument of Step \ref{step:phi} below, which constructs
an $\OVIC(R)$-morphism $\phi\colon [R^n] \rightarrow [R^{n+1}]$ such that $g = \phi \circ f$
is the $(a,b)$-stabilization of $f$.

\begin{step}{3}
\label{step:partial}
We construct the partial order $\preceq$ such that $\leq$ is a refinement of $\preceq$.
\end{step}

Consider $\OVIC(R)$-morphisms $f\colon [R^d] \rightarrow [R^n]$
and $g\colon [R^d] \rightarrow [R^m]$ in $\fP(d)$.   We then
say that $f \prec g$ if for some $r \geq 1$ there exists a sequence of $\OVIC(R)$-morphisms
\[f = h_0,h_1,\ldots,h_r = g,\]
where for $0 \leq i < r$ the $\OVIC(R)$-morphism $h_{i+1}$ is a stabilization of
the $\OVIC(R)$-morphism $h_i$.  Since a stabilization increases the rank of
the codomain by $1$, this implies that $m = n + r$, and in particular
that $n < m$.  This clearly defines a partial ordering $\preceq$ on $\fP(d)$, and since $f \prec g$ required $n<m$
our total ordering $\leq$ from Step \ref{step:total} refines $\preceq$.

\begin{step}{4}
\label{step:well}
We prove that $\preceq$ is a well partial order.
\end{step}

We will embed $(\fP(d),\preceq)$ into a poset $(\Sigma^{\ast},\preceq)$ of words, where $\Sigma$ is
a finite set of letters and $\preceq$ is as\footnote{Recall that for
words $s_1 \cdots s_p$ and $t_1 \cdots t_q$ in $\Sigma^{\ast}$, we have
$s_1 \cdots s_p \preceq t_1 \cdots t_q$ if there exists a 
strictly increasing function $\lambda \colon \{1,\ldots,p\} \rightarrow \{1,\ldots,q\}$ with the
following two properties:
\begin{compactitem}
\item $s_i = t_{\lambda(i)}$ for $1 \leq i \leq p$, and
\item for all $1 \leq j \leq q$, there exists some $1 \leq i \leq p$ such that $\lambda(i) \leq j$ and $t_{\lambda(i)} = t_j$.
\end{compactitem}}
in Lemma \ref{lemma:words}.  That lemma says
that $(\Sigma^{\ast},\preceq)$ is a well partial ordering, so this will imply that $(\fP(d),\preceq)$ is
as well.

First, define
\[\hR = R \sqcup \{\clubsuit\},\]
where $\clubsuit$ is a formal symbol.  Though $\hR$ is not a ring, it still makes
sense to speak about matrices with entries in $\hR$.  Define
\[\Sigma = \Set{$(M_1,M_2)$}{$M_1 \in \Mat_{\mu,\mu d}(\hR)$ and $M_2 \in \Mat_{d,1}(R)$}.\]
We then define a map $\iota\colon \fP(d) \rightarrow \Sigma^{\ast}$ in the following way.  

Consider an element $f\colon [R^d] \rightarrow [R^n]$ of $\fP(d)$.  Write $f = (f',f'')$. 
Let $r_1,\ldots,r_n \in \Mod_{1,d}(R)$ be the rows of the matrix representing
$f'\colon R^d \rightarrow R^n$ and let $c_1,\ldots,c_n \in \Mod_{d,1}(R)$ be the columns
of the matrix representing $f''\colon R^n \rightarrow R^d$.  We have
\[\Phi(r_1),\ldots,\Phi(r_n) \in \Mod_{\mu,\mu d}(R),\]
and the word
\[(\Phi(r_1),c_1) \cdots (\Phi(R_n),c_n) \in \Sigma^{\ast}\]
contains all the information about $f$.  However, this encoding is redundant since
it contains not just the data of the free rows of $f'$, but also the data of
the dependent rows.

For each $1 \leq i \leq n$, we modify $\Phi(r_i) \in \Mat_{\mu,\mu d}(R)$ to
form $\hr_i \in \Mat_{\mu,\mu d}(\hR)$ in the following way.
Each row of $\Phi(r_i)$ is a row of $\Phi(f')$.  If that row is a free row, then do not
change it.  Otherwise, if it is a dependent row, then replace each entry in it
with the formal symbol $\clubsuit$.  The result is a matrix
$\hr_i \in \Mat_{\mu,\mu d}(\hR)$ some of whose entire rows consist of repeated $\clubsuit$'s.

We now define
\[\iota(f) = (\hr_1,c_1)(\hr_2,c_2)\cdots(\hr_n,c_n) \in \Sigma^{\ast}.\]
This is an injection since knowing $\iota(f)$, we can reconstruct $f''$ and all the free rows
of $\Phi(f')$, and this determines $f'$ and hence $f = (f',f'')$ by Lemma \ref{lemma:comparefree}.  
That $\iota$ is order-preserving is immediate from the definitions.

\begin{step}{5}
\label{step:phi}
We construct the $\phi$ satisfying (i).
\end{step}

Consider $\OVIC(R)$-morphisms $f\colon [R^d] \rightarrow [R^n]$
and $g\colon [R^d] \rightarrow [R^m]$ with $f \preceq g$.  Our goal is to construct
an $\OVIC(R)$-morphism $\phi\colon [R^n] \rightarrow [R^m]$ such that
$g = \phi \circ f$.  Examining the definition of the partial ordering $\preceq$ in
Step \ref{step:partial}, we see that it is enough to deal with the case where
$g$ is an $(a,b)$-stabilization of $f$ for some $1 \leq a \leq b \leq n$.  The
general case can be dealt with by iterating this $m-n$ times.

Write $f = (f',f'')$ and $g = (g',g'')$.  By definition, the following three things hold:
\begin{compactitem}
\item $g''$ is obtained from $f''$ by inserting a copy of the $a^{\text{th}}$ column of $f''$ after the
$b^{\text{th}}$ column.
\item Let $\hg'\colon R^d \rightarrow R^{n+1}$ be the matrix obtained from $f'$ by inserting a copy of
the $a^{\text{th}}$ row of $f'$ after the $b^{\text{th}}$ row.  Then 
all the free rows of $\Phi(g')$ equal the corresponding rows of $\Phi(\hg')$.
\item None of the dependent rows of $\Phi(f')$ are contained in rows coming from the $a^{\text{th}}$
row of $f'$.
\end{compactitem}
Let $\psi\colon R^d \rightarrow R^n$ be the canonical splitting of $f''$ (see Lemma \ref{lemma:canonicalsplitting}).
Let $\fc \in R^d$ be the $a^{\text{th}}$ column of the matrix representing $f''$, and set $\hfc = \psi(\fc) \in R^n$.
We then define $\phi = (\phi',\phi'')$ in the following way:
\begin{compactitem}
\item $\phi''\colon R^{n+1} \rightarrow R^n$ is represented by the matrix obtained by inserting $\hfc$ after
the $b^{\text{th}}$ column of $\id\colon R^{n} \rightarrow R^n$.
\item $\phi'\colon R^n \rightarrow R^{n+1}$ is represented by the matrix obtained by first subtracting
$\hfc$ from the $a^{\text{th}}$ column of $\id\colon R^n \rightarrow R^n$, and then inserting the
row $(0,\ldots,0,1,0,\ldots,0)$ with a $1$ in position $a$ after the $b^{\text{th}}$ row.
\end{compactitem}
For example, for $n=7$ and $a=3$ and $b=4$ we would have
%\begin{equation}
%\label{eqn:phiformula}
\[
  \phi'' = \left(\begin{array}{@{}cccc|c|ccc@{}}
1 & 0 & 0 & 0 & \hfc_1 & 0 & 0 & 0 \\
0 & 1 & 0 & 0 & \hfc_2 & 0 & 0 & 0 \\
0 & 0 & 1 & 0 & \hfc_3 & 0 & 0 & 0 \\
0 & 0 & 0 & 1 & \hfc_4 & 0 & 0 & 0 \\
0 & 0 & 0 & 0 & \hfc_5 & 1 & 0 & 0 \\ 
0 & 0 & 0 & 0 & \hfc_6 & 0 & 1 & 0 \\
0 & 0 & 0 & 0 & \hfc_7 & 0 & 0 & 1 \\
\end{array}\right)
\quad
\phi' = \left(\begin{array}{@{}ccccccc@{}}
1 & 0 & -\hfc_1  & 0 & 0 & 0 & 0 \\
0 & 1 & -\hfc_2  & 0 & 0 & 0 & 0 \\
0 & 0 & 1-\hfc_3 & 0 & 0 & 0 & 0 \\
0 & 0 & -\hfc_4  & 1 & 0 & 0 & 0 \\
\hline
0 & 0 & 1       & 0 & 0 & 0 & 0 \\
\hline
0 & 0 & -\hfc_5  & 0 & 1 & 0 & 0 \\
0 & 0 & -\hfc_6  & 0 & 0 & 1 & 0 \\
0 & 0 & -\hfc_7  & 0 & 0 & 0 & 1 \\
              \end{array}\right).
            \]
%\end{equation}
It is clear that $\phi'' \circ \phi' = \text{id}$ and that the matrix representing 
$f'' \circ \phi''\colon R^{n+1} \rightarrow R^d$ is obtained by inserting $f''(\hfc) = \fc$
after the $b^{\text{th}}$ column of the matrix representing $f''$.  Moreover, examining
the construction of the canonical splitting in Lemma \ref{lemma:canonicalsplitting}, we see
that the entries of $\Phi(\hfc) \in R^{\mu n}$ lying in the free rows of $\Phi(f')$ are all
$0$, so the matrix corresponding to $\phi' \circ f'$ is obtained by first inserting a copy of the
$a^{\text{th}}$ row of the matrix representing $f'$ after the $b^{\text{th}}$ row of that matrix,
and then possibly modifying the dependent rows.

\begin{step}{6}
We prove that the $\phi$ we constructed satisfy (ii).
\end{step}

Just like in the previous step, it is enough to deal with the case $g\colon [R^d] \rightarrow [R^{n+1}]$
is a stabilization of $f\colon [R^d] \rightarrow [R^n]$.
Consider some $\OVIC(R)$-morphism $h\colon [R^d] \rightarrow [R^n]$ such that $h<f$.  Our goal is
to prove that $\phi \circ h < \phi \circ f$.  Write $f = (f',f'')$ and $h = (h',h'')$ and
$\phi = (\phi',\phi'')$.

Examining the construction of the total ordering $\leq$ in Step \ref{step:total}, we see
that there are three cases we have to deal with.  The first is where
\[(\fS(h'',1),\ldots,\fS(h'',q)) < (\fS(f'',1),\ldots,\fS(f'',q)),\]
where subsets of $\{1,\ldots,\mu_k n\}$ are ordered
using the lexicographic ordering and these tuples are further ordered
using the lexicographic ordering.  Lemma \ref{lemma:ordersfunction}
then implies that
\[(\fS(h'' \circ \phi'',1),\ldots,\fS(h'' \circ \phi'',q)) < (\fS(f'' \circ \phi'',1),\ldots,\fS(f'' \circ \phi'',q)),\]
so $\phi \circ h < \phi \circ f$.

The second case is where $\fS(h'',k) = \fS(f'',k)$ for all $k$, but 
the columns of $\Phi(h'')$ are less than the columns of $\Phi(f'')$ in the lexicographic
ordering (using our fixed total ordering on $R^{\mu d}$).  In this case, it
follows from our construction of $\phi$ that the matrix representing
$h'' \circ \phi''$ is obtained from the matrix representing $h''$ by inserting a copy
of the $a^{\text{th}}$ column of the matrix representing $f''$ after the $b^{\text{th}}$
column, and similarly for $f'' \circ \phi''$.  
This implies that the columns of $\Phi(h'' \circ \phi'')$ remain less than the columns of $\Phi(f'' \circ \phi'')$,
so $\phi \circ h < \phi \circ f$.

The final case is where $h'' = f''$, but the free rows of $\Phi(h')$ are less than
the free rows of $\Phi(f')$ in the lexicographic ordering.  In this case, $\Phi(\phi' \circ h')$
is obtained from $\Phi(h')$ by taking a bunch of free rows and duplicating them lower in
the matrix, and similarly for $\Phi(\phi' \circ f')$ (with the same rows).  It follows
that the free rows of $\Phi(\phi' \circ h')$ remain less than the free rows
of $\Phi(\phi' \circ f')$, so $\phi \circ h < \phi \circ f$.
\end{proof}

\subsection{Local Noetherianity}
\label{section:ovicproof}

We now prove the following, which verifies part (c) of Theorem \ref{theorem:orderedsummary}: 

\begin{proposition}
\label{proposition:ovicnoetherian}
Let $R$ be a finite ring and let $\bk$ be a left Noetherian ring.  Then the category of $\OVIC(R)$-modules over $\bk$
is locally Noetherian.
\end{proposition}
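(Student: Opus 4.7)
The plan is to prove that each representable $\OVIC(R)$-module $P(d)$, defined by $P(d)_n = \bk[\Hom_{\OVIC(R)}(R^d, R^n)]$, is Noetherian. This suffices because any finitely generated $\OVIC(R)$-module is a quotient of a finite direct sum $\bigoplus_{i=1}^r P(d_i)$ (sending the identity generator of each $P(d_i)$ to a generator of $M$), and Noetherianity in an abelian category is preserved by quotients and finite direct sums. I will carry out a Gr\"obner-style argument leveraging both the well partial order $\preceq$ and its total refinement $\leq$ on $\fP(d)$ produced by Lemma~\ref{lemma:constructorder}, together with the left Noetherianity of $\bk$.

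For nonzero $x \in P(d)_n$, let $\init(x)$ be the largest $f \in \Hom_{\OVIC(R)}(R^d, R^n)$ appearing with nonzero coefficient under $\leq$; this is well-defined since $R$ is finite and thus each $\Hom$-set is finite. Let $\lambda(x) \in \bk$ denote that coefficient. For an $\OVIC(R)$-submodule $N \subseteq P(d)$ and each $f \in \fP(d)$, set
\[ I_f(N) = \{\lambda(x) \mid 0 \neq x \in N,\ \init(x) = f\} \cup \{0\}, \]
which is a left ideal of $\bk$. Two properties of this ``initial data'' will drive the proof. First, if $f \preceq g$, then $I_f(N) \subseteq I_g(N)$: given $x \in N$ with $\init(x) = f$ and $\lambda(x) = \lambda$, apply property (i) of Lemma~\ref{lemma:constructorder} to obtain $\phi$ with $\phi \circ f = g$; property (ii) forces $\phi \circ h < g$ for every $h$ in the support of $x$ with $h < f$, so $\init(\phi \cdot x) = g$ and $\lambda(\phi \cdot x) = \lambda$, whence $\lambda \in I_g(N)$. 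Second, if $N \subseteq N'$ with $I_f(N) = I_f(N')$ for every $f$, then $N = N'$: otherwise pick $x' \in N' \setminus N$ whose leading monomial is minimal within its grading $P(d)_n$ (possible since $\Hom_{\OVIC(R)}(R^d, R^n)$ is finite, hence well-ordered by $\leq$), find $y \in N_n$ with $\init(y) = \init(x')$ and $\lambda(y) = \lambda(x')$ using the hypothesis, and observe that $x' - y$ still lies in $N' \setminus N$ with strictly smaller leading monomial, contradicting minimality.

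The core of the argument is then the ascending chain condition for submodules. Suppose $N_1 \subsetneq N_2 \subsetneq \cdots$ is a strict ascending chain; by the injectivity just established, for each $k$ there is some $f_k \in \fP(d)$ with $I_{f_k}(N_k) \subsetneq I_{f_k}(N_{k+1})$, and we choose $\lambda_k \in I_{f_k}(N_{k+1}) \setminus I_{f_k}(N_k)$. Since $(\fP(d), \preceq)$ is a well partial order, pass to a subsequence with $f_{k_1} \preceq f_{k_2} \preceq \cdots$. For each $i < m$, the monotonicity property above gives
\[ \lambda_{k_i} \in I_{f_{k_i}}(N_{k_i+1}) \subseteq I_{f_{k_m}}(N_{k_i+1}) \subseteq I_{f_{k_m}}(N_{k_m}). \]
Left Noetherianity of $\bk$ applied to the ascending chain of left ideals $\sum_{i \leq m} \bk\lambda_{k_i}$ yields some $m$ for which $\lambda_{k_m} = \sum_{i < m} \mu_i \lambda_{k_i}$ with $\mu_i \in \bk$. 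Then $\lambda_{k_m}$ lies in the left ideal $I_{f_{k_m}}(N_{k_m})$, contradicting the choice $\lambda_{k_m} \notin I_{f_{k_m}}(N_{k_m})$.

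The main obstacle is coordinating three nested layers of monotonicity: along the chain of submodules, along a $\preceq$-increasing subsequence in $\fP(d)$, and along an ascending chain of left ideals in $\bk$. Making them align so that the Noetherian contradiction fires at the right stage is exactly what Lemma~\ref{lemma:constructorder} is engineered to support, since without property (i) the leading-coefficient ideals would fail to be monotone along $\preceq$, and without property (ii) the action of $\OVIC(R)$ on leading monomials would not be controlled, so the assignment $N \mapsto (I_f(N))_f$ would lose either its monotonicity or its injectivity.
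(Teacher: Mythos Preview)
Your proof is correct and follows essentially the same Gr\"obner-style strategy as the paper's proof: reduce to the representables $P(d)$, use the orders from Lemma~\ref{lemma:constructorder} to define leading data, show that equal leading data forces equality of submodules, and then combine the well partial order on $\fP(d)$ with left Noetherianity of $\bk$ to contradict an infinite strict chain. The only difference is packaging: you track the family of leading-coefficient ideals $I_f(N)$ indexed by $f\in\fP(d)$ and explicitly record the monotonicity $I_f(N)\subseteq I_g(N)$ along $\preceq$, whereas the paper bundles this information into a single ``initial module'' $\sinit(M)_n\subset P(d)_n$; these encode the same data and the contradiction is reached in the same way.
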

\begin{proof}
Just like in the proof of Theorem \ref{maintheorem:noetherian} in
\S \ref{section:reductionordered}, we will prove this by studying representable modules.  For $d \geq 0$, let
$P(d)$ be the $\OVIC(R)$-module defined via the formula
\[P(d)_n = \bk[\Hom_{\OVIC(R)}(R^d,R^n)] \quad \quad (n \geq 0).\]
As we discussed in the proof of Theorem \ref{maintheorem:noetherian}, every finitely generated
$\OVIC(R)$-module over $\bk$ is the surjective image of a direct sum of finitely many $P(d)$ (for differing choices of $d$).
To prove that every submodule of such a finitely generated module is finitely generated, it is thus
enough to prove this for $P(d)$.

We start with some preliminaries.  Let $\preceq$ and $\leq$ be the orderings on 
\[\fP(d) = \bigsqcup_{n=0}^{\infty} \Hom_{\OVIC(R)}(R^d,R^n)\]
provided by
Lemma \ref{lemma:constructorder}.  For a nonzero $x \in P(d)_n$, define the {\em initial term} of
$x$, denoted $\init(x)$, as follows.  Write
\begin{align*}
&x = \alpha_1 f_1 + \cdots + \alpha_k f_k \quad \text{with $\alpha_1,\ldots,\alpha_k \in \bk \setminus \{0\}$}\\
&\quad\quad\quad\text{and $f_1,\ldots,f_k \in \Hom_{\OVIC(R)}(R^d,R^n)$ pairwise distinct}.
\end{align*}
Order these terms such that $f_1 < f_2 < \cdots < f_k$.  Then $\init(x) = \alpha_k f_k$.

Next, for an $\OVIC(R)$-submodule $M$ of $P(d)$, define the {\em initial module} $\sinit(M)_{\bullet}$ of $M$ to be the
ordered sequence of $\bk$-modules defined via the formula
\[\sinit(M)_n = \bk\Set{$\init(x)$}{$x \in M_n$} \quad \quad (n \geq 1).\]  
Be warned that this need not be an $\OVIC(R)$-submodule of $P(d)$.  However, we do have the following.

\begin{claim}
If $N$ and $M$ are $\OVIC(R)$-submodules of $P(d)$ with $N \subset M$ and $\sinit(N)_{\bullet} = \sinit(M)_{\bullet}$, 
then $N = M$.
\end{claim}
\begin{proof}[Proof of claim]
Assume otherwise, and let $n \geq 0$ be such that $N_n \subsetneq M_n$.  Recall that $\leq$ is a well-order (c.f. Remark \ref{remark:extend}).
Consider the nonempty set
\[\Set{$f$}{there exists $x \in M_n \setminus N_n$ and $\alpha \in \bk \setminus \{0\}$ such that $\init(x) = \alpha f$}.\]
Since $\leq$ is a well total ordering, this set has a $\leq$-minimal element $f\colon [R^d] \rightarrow [R^n]$; indeed, if it did not we could find an infinite strictly
decreasing sequence of elements in it.
Let $x \in M_n \setminus N_n$ satisfy $\init(x) = \alpha f$ with $\alpha \in \bk \setminus \{0\}$.  By assumption, there exists
some $y \in N_n$ such that $\init(y) = \alpha f$.  The $\alpha f$ terms cancel in $x-y$, so
$\init(x-y) = \beta g$ with $\beta \in \bk \setminus \{0\}$ and $g < f$.  Since $x \in M_n \setminus N_n$ and $y \in N_n$, we have
$x-y \in M_n \setminus N_n$, so this contradicts the minimality of $f$.
\end{proof}

We now commence with the proof that every $\OVIC(R)$-submodule of $P(d)$ is finitely generated.  Assume otherwise, so
there exists a strictly increasing chain
\[M^0 \subsetneq M^1 \subsetneq M^2 \subsetneq \cdots\]
of $\OVIC(R)$-submodules of $P(d)$.  By the above claim, the initial modules $\sinit(M^i)_{\bullet}$ must all be distinct, so for
all $i \geq 1$ we can find some $n_i \geq 0$ such that there exists some
\[\alpha_i f_i \in \sinit(M^i)_{n_i} \setminus \sinit(M^{i-1})_{n_i} \quad \text{with $\alpha_i \in \bk \setminus \{0\}$ and $f_i\colon [R^d] \rightarrow [R^{n_i}]$}.\]
Let $x_i \in M^i_{n_i}$ be an element with $\init(x_i) = \alpha_i f_i$.

Since $\preceq$ is a well partial ordering, there exists some increasing sequence
$i_1 < i_2 < i_3 < \cdots$
of indices such that
\[f_{i_1} \preceq f_{i_2} \preceq f_{i_3} \preceq \cdots.\]
Since $\bk$ is a left Noetherian ring, there exists some $m \geq 1$ such that $\alpha_{m+1}$ is in the
left $\bk$-ideal generated by $\alpha_{i_1},\ldots,\alpha_{i_m}$, i.e., we can write
\[\alpha_{m+1} = c_1 \alpha_{i_1} + \cdots + c_m \alpha_{i_m} \quad \text{with $c_1,\ldots,c_m \in \bk$}.\]
For $1 \leq j \leq m$, the fact that $f_{i_j} \preceq f_{i_{m+1}}$ implies by part (i) of
Lemma \ref{lemma:constructorder} that there exists some $\OVIC(R)$-morphism 
$\phi_j\colon [R^{n_{i_j}}] \rightarrow [R^{n_{i_{m+1}}}]$ such that $f_{i_{m+1}} = \phi_j \circ f_{i_j}$.
Conclusion (ii) of Lemma \ref{lemma:constructorder} implies that $\init(\phi_j \circ x) = \alpha_j f_{i_{m+1}}$.
Setting
\[y = \sum_{j=1}^m c_j (\phi_j \circ x_{i_j}) \in M^{i_m}_{n_{i_{m+1}}},\]
we thus see that
\[\init(y) = \sum_{j=1}^m c_j \alpha_j f_{i_{m+1}} = \alpha_{m+1} f_{i_{m+1}} = \init(x_{i_{m+1}}).\]
This contradicts the fact that
\[\init(x_{i_{m+1}}) \in \sinit(M^{i_{m+1}})_{n_{i_{m+1}}} \setminus \sinit(M^{i_m})_{n_{i_{m+1}}}.\]
The proposition follows.
\end{proof}

\begin{footnotesize}
\noindent
\begin{tabular*}{\linewidth}[t]{@{}p{\widthof{Department of Mathematics}+0.5in}@{}p{\linewidth - \widthof{Department of Mathematics} - 0.5in}@{}}
{\raggedright
Andrew Putman\\
Department of Mathematics\\
University of Notre Dame\\
279 Hurley Hall\\
South Bend, IN 46556\\
{\tt andyp@nd.edu}}
&
{\raggedright
Steven V Sam\\
Department of Mathematics\\
University of California, San Diego\\ 
9500 Gilman Drive\\
La Jolla, CA 92093-0112 \\
{\tt ssam@ucsd.edu}}
\end{tabular*}\hfill
\end{footnotesize}

\end{document}